\documentclass[amscd,amssymb,verbatim,12pt]{amsart}
\usepackage[all]{xy}
\usepackage{amssymb,latexsym, amsmath, amscd, array, graphicx}
\usepackage{pdfsync}

\swapnumbers \numberwithin{equation}{section}

\theoremstyle{plain}

\newtheorem{thm}{Theorem}[section]
\newtheorem{theorem}[thm]{Theorem}
\newtheorem{lem}[thm]{Lemma}

\newtheorem{prop}[thm]{Proposition}
\newtheorem{cor}[thm]{Corollary}
\newtheorem{Def}[thm]{Definition}

\theoremstyle{definition}
\newtheorem{defin}[thm]{Definition}

\newtheorem{question}[thm]{Question}


\DeclareMathOperator{\dist}{{\rm dist}}




\def\C{{\mathbb C}}
\def\Z{{\mathbb Z}}
\def\Q{{\mathbb Q}}
\def\R{{\mathbb R}}
\def\N{{\mathbb N}}
\def\H{{\mathbb H}}

\def\inn{\,\in\,}

\def\1{\hbox{\rm\rlap {1}\hskip.03in{\rom I}}}
\def\Bbbone{{\rm1\mathchoice{\kern-0.25em}{\kern-0.25em}
{\kern-0.2em}{\kern-0.2em}I}}


\long\def\forget#1\forgotten{} %

\newcommand\ver[1]{\marginpar{\tiny Changed in Ver \VER}}

\date{\today}
\begin{document}

\title[On cohomology of the Higson compactification]{On cohomology of the Higson compactification of hyperbolic spaces }

\author[A.~Dranishnikov]{Alexander  Dranishnikov$^{1}$}

\author[Th.~Gentimis]{Thanos  Gentimis}
\thanks{$^{1}$Supported by NSF, grant DMS-0904278}

\address{Alexander N. Dranishnikov, Department of Mathematics, University
of Florida, 358 Little Hall, Gainesville, FL 32611-8105, USA}
\email{dranish@math.ufl.edu}

\subjclass[2010]
{Primary 55N05; 
Secondary 55S35,  
20F67  
}

\keywords{}

\begin{abstract}
We show that  in dimensions $>1$ the cohomology groups  of the Higson compactification of the hyperbolic space
$\H^n$
with respect to the $C_0$ coarse structure are trivial.

Also we prove that the cohomology groups of the Higson compactification of $\H^n$ for the bounded coarse structure are trivial in all  even dimensions.
\end{abstract}

\maketitle \tableofcontents

\section {Introduction}
The coarse Baum-Connes conjecture  is a coarse analog
of the famous Baum-Connes conjecture ~\cite{HR}. It states that the coarse $K$-theory assembly map $\mu:K_*^{lf}(X)\to K_*(C^*(X))$
is an isomorphism for uniformly contractible metric spaces $X$ with bounded geometry  where the recipient of the coarse assembly map $\mu$
is
the K-theory of the Roe $C^*$-algebra of $X$. The Roe algebra admits an abelian approximation which fits into the commutative diagram
  \[
    \xymatrix{ K_*^{lf}(X) \ar[rr]^{\mu}\ar[dr]_{\mu'}& &K_*(C^*(X))�\\
    & K_{\ast-1}(\nu X)\ar[ur] &
    }.
    \]

The approximation $\mu'$ of the assembly map coincides with the boundary homomorphism in the exact sequence of the pair
$(\overline X,\nu X)$ where $\overline X=X\cup\nu X$ is the Higson compactification of $X$ and $\nu X$ is the 
Higson corona. We recall that the Higson compactification is the maximal ideal space for the $C^*$ algebra generated
by bounded functions with the gradient tending to zero at infinity.

The $K$-theory acyclicity of $\overline X$ would imply that $\mu'$ is isomorphism and therefore, $\mu$ is a monomorphism.
The injectivity of the coarse assembly map is a coarse analog of the analytic  Novikov conjecture.
The coarse Novikov conjecture is of a great interest  for many applications to the classical Novikov type conjectures.
Moreover, the rational acyclicity would be sufficient for many applications. Since the compact space $\overline X$ is not metrizable, it is preferable to
consider the acyclicity with respect to the  cohomology. Note that  cohomology groups are
well defined  for any spectrum and for all compact spaces and they satisfy the Steenrod-Eilenberg axioms.

The statement about the acyclicity of $\overline X$ for universal covers of aspherical manifolds appeared in~\cite{Ro1}
under the name of {\em the Higson Conjecture}. It turns out that the conjecture is false in dimension 1~\cite{K}.
Still, for some of the applications it would suffices to have the acyclicity  in the top dimension.
It turns out that even this version of the Higson Conjecture
is false for the euclidean spaces: $H^n(\overline{\R^n};\Q)\ne 0$ \cite{DF}. On the other hand it was proven to be true \cite{DF}  for some $n$  for the $n$-dimensional hyperbolic spaces $\H^n$.
In \cite{DF} a trick using Hopf bundles $S^{2n-1}\to S^n$ was introduced to
show that $H^n(\overline{\H^n})=0$ for
$n$= 2, 4, and 8. Namely, every $n$-dimensional cohomology class of $\overline\H^n$ can be represented
by a map $f:\overline\H^n\to S^n$. The geometry of $\H^n$ allows to construct a 
lift $g:\H^n\to S^{2n-1}$ of the restriction $f|_{\H^n}$ with respect to the Hopf bundle which is 
slowly oscillating and hence is extendible to the 
Higson corona, $\bar g:\overline\H^n\to S^{2n-1}$. Since $\dim\overline\H^n=n< 2n-1$ (see \cite{DKU}), it follows that $\bar g$ 
and, hence, $f$ are nullhomotopic.

The main point of the above bundle trick, besides the geometry of $\H^n$,
is that the total space of the Hopf bundle is compact and
has higher connectivity than the base. One can play this trick rationally.
Namely, using the spherical tangent bundle to even dimensional spheres
one can obtain that  $ H^n(\overline{\H^n};\Q)=0$ for all even $n$.  This together with
the fact that  $\bar H^*(\overline{\H^n};F)=0$ for finite coefficients (\cite{DFW}),
implies  that for even $n$, $\bar H^n(\overline{\H^n})=0$. 
The following proposition sets the limits of the bundle trick.

\begin{prop}\label{odd}
For odd $n$ there is no fibration $p:X\to K$  between finite CW complexes such that $K$ is
$(n-1)$-connected with $rank(\pi_n(K))\ne 0$ and
$\pi_i(X)\otimes\Q=0$ for $i\le n$.
\end{prop}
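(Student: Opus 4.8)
The plan is to argue by contradiction, pass to $\Q$‑coefficient cohomology, and play off the multiplicative structure of the Serre spectral sequence of $p$ against the parity of $n$ and the finite‑dimensionality of $X$. So suppose such a fibration $p\colon X\to K$ exists; let $F$ be its fibre and put $d=\dim X$. First I would record the cohomological content of the hypotheses. Since $K$ is $(n-1)$‑connected with $\rk\pi_n(K)\ne 0$, the Hurewicz theorem gives $H^n(K;\Q)\cong\Hom(\pi_n(K),\Q)\ne 0$ and $\widetilde H^i(K;\Q)=0$ for $i\le n-1$; and by the rational Hurewicz theorem $\pi_i(X)\otimes\Q=0$ for $i\le n$ gives $\widetilde H^i(X;\Q)=0$ for $i\le n$ (one may take $X$, hence $F$, simply connected — the general case needs only routine $\pi_1$‑bookkeeping and is irrelevant for the applications). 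As $K$ is simply connected, the Serre spectral sequence of $p$ has $E_2^{p,q}=H^p(K;\Q)\otimes H^q(F;\Q)$.

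Next I would extract the rational cohomology of $F$ in low degrees. Because $H^j(K;\Q)=0$ for $1\le j\le n-1$, in every total degree $\le n$ the only possibly nonzero column is $p=0$; comparing with $\widetilde H^{\le n}(X;\Q)=0$ forces inductively $\widetilde H^i(F;\Q)=0$ for $i\le n-2$, and then forces the transgression $d_n\colon H^{n-1}(F;\Q)=E_n^{0,n-1}\to E_n^{n,0}=H^n(K;\Q)$ to be an isomorphism — surjective because $E_\infty^{n,0}$ must vanish and $E^{n,0}$ is hit by no differential but $d_n$, injective because $E_\infty^{0,n-1}$ must vanish and $E^{0,n-1}$ carries no differential but $d_n$ in the relevant range (this uses $n\ge 3$; the case $n=1$ is trivial, as the fibre sequence already forces $H_1(X;\Q)\twoheadrightarrow H_1(K;\Q)$, contradicting $H_1(X;\Q)=0\ne H_1(K;\Q)$). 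In particular $H^{n-1}(F;\Q)\ne 0$; fix $0\ne u\in H^{n-1}(F;\Q)$ with $d_n u=\alpha\ne 0$ in $H^n(K;\Q)$. The decisive feature, not yet used, is that $n-1$ is even.

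Then I would bring in finiteness. Since $F$ is a subspace of the $d$‑dimensional complex $X$, $H^i(F;\Q)=0$ for $i>d$; as $u^k\in H^{k(n-1)}(F;\Q)$, the class $u$ is nilpotent, so there is a least $m\ge 2$ with $u^m=0$. All powers $u,\dots,u^{m-1}$ survive to the page $E_n$ (no differential leaves the column $p=0$ before page $n$, since $H^{<n}(K;\Q)=0$), one has $d_n(u^k)=k\,u^{k-1}\alpha$ by Leibniz, and $u\cdot u^{m-1}$ already vanishes on $E_n$ because it equals $u^m=0$ in $H^{\ast}(F;\Q)$. Hence, using $(-1)^{n-1}=1$,
\[
0=d_n\bigl(u\cdot u^{m-1}\bigr)=\alpha\,u^{m-1}+(-1)^{n-1}u\cdot (m-1)u^{m-2}\alpha=m\,\alpha\,u^{m-1}.
\]
But $\alpha\,u^{m-1}$ is nonzero on $E_n$: in $E_2^{n,(m-1)(n-1)}=H^n(K;\Q)\otimes H^{(m-1)(n-1)}(F;\Q)$ it is $\alpha\otimes u^{m-1}$ with both tensor factors nonzero (minimality of $m$ gives $u^{m-1}\ne 0$), and it is annihilated by no differential $d_r$ with $r\le n-1$ because $d_r\alpha=0$ for degree reasons and $d_r u^{m-1}=0$ as $H^r(K;\Q)=0$. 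Since $m\ne 0$ in $\Q$, this is the desired contradiction.

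The one genuinely delicate step is this last one, and it is exactly where oddness of $n$ enters: $n-1$ even means $u^2$ need not vanish (so $m$ may exceed $2$) and, crucially, the sign $(-1)^{n-1}=+1$ makes the two Leibniz terms reinforce rather than cancel, yielding the nonzero multiple $m\,\alpha\,u^{m-1}$. For $n$ even the same computation collapses to $0=\alpha u-\alpha u$, consistent with the spherical tangent bundles over $S^{2m}$, so no contradiction arises — which is precisely why the bundle trick works in even dimensions only. The rest is bookkeeping: verifying that the transgression and product classes really live on $E_n$, that the bidegree carrying $\alpha u^{m-1}$ is undisturbed by earlier differentials, and dispatching the low‑dimensional and non‑simply‑connected technicalities.
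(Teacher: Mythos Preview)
Your argument is correct and complete (modulo the same simple–connectivity caveat the paper also glosses over), but it follows a genuinely different route from the paper's proof. The paper pulls the fibration back along a map $S^n\to K$ representing an infinite-order class, then builds a map $q\colon\Omega S^n\to F$ whose homotopy fibre is $\Omega X'$; since $\pi_i(X')\otimes\Q=0$ for $i\le n$, the map $q$ is a rational $(n-1)$–equivalence, so $q^*\colon H^{n-1}(F;\Q)\to H^{n-1}(\Omega S^n;\Q)$ is an isomorphism. For odd $n$ one has $H^*(\Omega S^n;\Q)=\Q[x]$ with $\deg x=n-1$, and pulling back the generator $y\in H^{n-1}(F;\Q)$ gives $q^*(y^m)=x^m\ne 0$ for all $m$, contradicting the finite cohomological dimension of $F$.

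Your proof works directly with the multiplicative Serre spectral sequence of $p$ itself: you identify the transgression $d_n\colon H^{n-1}(F;\Q)\to H^n(K;\Q)$ as an isomorphism and then use the Leibniz rule to show that the transgressive class $u$ cannot be nilpotent, the sign $(-1)^{n-1}=+1$ being exactly where oddness of $n$ enters. This is essentially an \emph{in situ} version of the Kudo-type computation that underlies $H^*(\Omega S^n;\Q)=\Q[x]$; you are reproving that polynomial behaviour inside $F$ rather than importing it via a comparison map. The paper's approach is shorter and more conceptual once one knows the loop-space cohomology, while yours is more self-contained, avoids the pullback to $S^n$ and the construction of $q$, and makes the role of the parity of $n$ completely transparent at the level of the differential.
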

\begin{proof}
We show it for $r=rank(\pi_n(K))=1$. The general case can be done by
a minor modification. Let $p:X\to K$ be such a fibration with fiber $F$.
We fix $f:S^n\to K$ which defines an element
of the $\pi_n(K)$ of infinite order. Consider the pull-back
fibration $p':X'\to S^n$. The comparison of the homotopy exact
sequence of fibrations $p$ and $p'$ shows that $\pi_i(X')\otimes\Q=0$ for
$i\le n$. A lift of the natural homotopy $\Omega S^n\times[0,1]\to S^n$ defines a 
map $q:\Omega S^n\to F$ with the homotopy fiber $\Omega X'$.
Then the corresponding fibration $\Omega S^n\stackrel{\Omega X'}\to F$
induces a rational isomorphism of the  $(n-1)$-dimensional homotopy
groups. By the modulo torsions Whitehead theorem~\cite{Sp} we
obtain that $q_*:H_{n-1}(\Omega S^n;\Q)\to H_{n-1}(F;\Q)$ is an
isomorphism. Therefore, $q^*:H^{n-1}(F;\Q)\to H^{n-1}(\Omega S^n;\Q)$ is
an isomorphism. Note that for odd $n$, $H^*(\Omega S^n;\Q)=\Q[x]$
where $deg(x)=n-1$ \cite{H}. Let $y$ be the generator
of $H^{n-1}(F;\Q)$ which corresponds to $x$ by the above
isomorphism. Since $F$ is compact, $y^m=0$ for some $m$. Then
$x^m=0$. Contradiction.
\end{proof}
In this paper we prove (Theorem~\ref{main2}) that $\bar H^k(\H^n)=0$ for all $n$ for all even $k$.
We don't know how to treat the odd dimensional case. Besides, it is known that
$H^1(\overline{\H^n};\Q)\ne 0$ for all $n$.

In ~\cite{Ro2} the Higson compactification was defined for any coarse structure so that
the classic Higson compactification is the Higson compactification with respect to the 
{\em bounded coarse structure} on a metric space. Another natural coarse structure on  metric spaces
which already found applications in geometry and topology ~\cite{W1},\cite{W2} is the $C_0$ coarse structure.
It turns out that  the Higson compactification $h_0\H^n$ of the hyperbolic space $\H^n$
with respect to the $C_0$ coarse structure
is acyclic in all dimensions $>1$ (Theorem~\ref{main1}). To prove this result we use the techniques of the
$\ell_{\infty}$-cohomology.

\section{$\ell_{\infty}$- cohomology}

The following definition is taken from~\cite{Ge}.
A norm on an abelian group $A$ is a non-positive function $|\ |: A\to \R$
such that

1. $|a|=0$ iff $ a=0$;

2. $|a|=|-a|$;

3. $|a+b|\le |a|+|b|$.

Let $A$ be a normed abelian group and $X$ a CW complex. Let $E_n(X)$
denote the set of $n$-cells in $X$ and
$C^n(X,A)=Hom(\oplus_{E_n(X)}\Z,A)$ denote the group of cellular
$n$-cochains on $X$ with value in $A$. Let
$$C_{(\infty)}^n(X,A)=\{\phi\in C^n(X,A)\mid\ \exists\ b\ :\
|\phi(e)|\le b\ \forall\ e\in E_n(X)\}$$ be the subgroup generated by
bounded cochains. If one takes the $\ell_1$ norm on
$C_n(X)=\oplus_{E_n(X)}\Z$ then the group of bounded cochains
consists of homomorphisms $\phi:C_n(X)\to A$ bounded with respect to
the norms. We denote the corresponding cohomology groups by
$H_{(\infty)}^*(X;A)$. The bounded value cohomology groups for the
augmented chain complex $$\dots \to C_n(X)\to \dots\to
C_0(X)\to\Z\to 0$$ are called the reduced $\ell_{\infty}$ cohomology groups
and denoted by $\bar H_{(\infty)}^*(X;A)$.

If a group $A$ is finitely generated then, clearly, $H_{(\infty)}^*(X;A)$ does not depend on the choice
of the norm on $A$.

REMARK. We note that the $\ell_{\infty}$-cohomology differs from the bounded cohomology defined by
Gromov ~\cite{G1}. The difference is that the latter is defined by means of bounded singular cochains whereas
the former is defined in terms of bounded cellular cochains.

The following Proposition  was proven in \cite{Ge}.

\begin{prop}\label{ger}
Let $X$ be the universal cover of $K(\pi,1)$ with finite skeletons
$K(\pi,1)^{(n)}$ for all $n$. Then the inclusion $\Z\to\R$ induces an isomorphism
$$\bar H^i_{(\infty)}(X;\Z)\to \bar H^i_{(\infty)}(X;\R)$$
for $i\ge 0$.
\end{prop}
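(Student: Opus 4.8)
The plan is to prove the map is both injective and surjective by exploiting the short exact coefficient sequence $0 \to \Z \to \R \to \R/\Z \to 0$, together with the fact that $\R/\Z$ is a torsion (hence divisible, and locally finite) group, which should make the $\ell_\infty$-cohomology with $\R/\Z$ coefficients behave well. First I would record that bounded-value cellular cochains form an exact-functor situation: since the cochain groups $C^n(X,A)$ are just $\Hom$ of free abelian groups into $A$ and the boundedness condition is a pointwise norm bound, a short exact sequence of normed abelian groups $0 \to A' \to A \to A'' \to 0$ with the subspace/quotient norms induces a short exact sequence of bounded-value cochain complexes $0 \to C^*_{(\infty)}(X,A') \to C^*_{(\infty)}(X,A) \to C^*_{(\infty)}(X,A'') \to 0$, and hence a long exact sequence in reduced $\ell_\infty$-cohomology. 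Applying this to $0 \to \Z \to \R \to \R/\Z \to 0$ reduces the claim to showing that $\bar H^i_{(\infty)}(X;\R/\Z) = 0$ for all $i \ge 0$.

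The key step is therefore the vanishing of the reduced $\ell_\infty$-cohomology with $\R/\Z$ coefficients. Here I would use that $\R/\Z$, being a bounded (indeed compact) group, carries a norm in which \emph{every} cochain is bounded, so $C^*_{(\infty)}(X,\R/\Z) = C^*(X,\R/\Z)$ is the ordinary cellular cochain complex, and $\bar H^*_{(\infty)}(X;\R/\Z)$ is just the ordinary reduced cohomology $\bar H^*(X;\R/\Z)$. Since $X$ is the universal cover of a $K(\pi,1)$, it is contractible, so $\bar H^*(X;\R/\Z) = 0$ in all degrees. Feeding this back into the long exact sequence, the connecting maps force $\bar H^i_{(\infty)}(X;\Z) \xrightarrow{\ \cong\ } \bar H^i_{(\infty)}(X;\R)$ to be isomorphisms for all $i \ge 0$. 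One must also check the base of the long exact sequence (the $i=0$ and augmentation end) is consistent: the augmented complex is exact at the bottom for the contractible space $X$ with any coefficients, and the finiteness of the skeletons $K(\pi,1)^{(n)}$ guarantees the bounded-cochain subcomplexes are well-behaved (finitely many cells in each dimension means no boundedness obstruction at any finite stage), so no lim$^1$ or degenerate-term issues arise.

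The main obstacle I anticipate is the exactness of the functor $A \mapsto C^*_{(\infty)}(X,A)$ on the chosen short exact sequence — specifically, surjectivity of $C^n_{(\infty)}(X,\R) \to C^n_{(\infty)}(X,\R/\Z)$, i.e. lifting a bounded $\R/\Z$-valued cochain to a bounded $\R$-valued cochain. Since $\R/\Z$ is bounded, the target imposes no real constraint, and a set-theoretic section $\R/\Z \to \R$ with image in $[0,1)$ gives a lift that is automatically bounded by $1$; so surjectivity holds at the cochain level. The subtler point, and the one I would write out carefully, is that this lift respects the norm conventions used to \emph{define} $\bar H^*_{(\infty)}$ in the augmented complex (the $\ell_1$ norm on chains), and that taking the subspace norm on $\Z \subset \R$ recovers the usual integer norm so that $C^*_{(\infty)}(X,\Z)$ is literally the bounded integral cochain complex appearing in the definition. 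Granting these compatibilities — which are exactly the kind of bookkeeping the finite-skeleton hypothesis is there to trivialize — the long exact sequence argument closes and gives the Proposition.
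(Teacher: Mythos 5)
Your argument is essentially identical to the paper's proof: both use the coefficient sequence $0\to\Z\to\R\to\R/\Z\to 0$, the observation that $\R/\Z$ is bounded in the quotient norm so that $\bar H^i_{(\infty)}(X;\R/\Z)=\bar H^i(X;\R/\Z)=0$ by contractibility of $X$, and the resulting long exact sequence. (One small slip that does no harm since you never use it: $\R/\Z$ is not a torsion group --- the relevant property is boundedness, which you do invoke correctly.)
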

\begin{proof}
The result follows from the facts that the group
$S^1=\R/\Z$ is bounded with respect
to the quotient norm, the equality
$$\bar H^i_{(\infty)}(X;S^1)
=\bar H^i(X;S^1)=0$$ for $i\ge 0$, and the Coefficient Long Exact
sequence.
\end{proof}

For a group $\Gamma$ with finite complex $K(\Gamma,1)^{(n)}$ we define
$$H_{(\infty)}^i(\Gamma;A)=H_{(\infty)}^i(X;A)$$ where $X$ is the universal cover.
It was shown in \cite{Ge} that the cohomology group does not depend on choice of $K(\Gamma,1)$
with that finiteness condition. We note that for a hyperbolic group $\Gamma$ there is a complex
$K(\Gamma,1)$ with  $K(\Gamma,1)^{(n)}$ finite for all $n$ \cite{BH}.

\begin{thm}\label{hypgr}
For every hyperbolic group $\Gamma$, $H^i_{(\infty)}(\Gamma)=0$ for all $i>1$.
\end{thm}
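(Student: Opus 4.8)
The plan is to show that the reduced $\ell_\infty$-cohomology $\bar H^i_{(\infty)}(\Gamma;\R)$ of a hyperbolic group vanishes in degrees $i>1$, and then invoke Proposition~\ref{ger} to transfer this to integer coefficients (and pass between reduced and unreduced cohomology, which agree above degree $0$). Working over $\R$ is the right setting because $\ell_\infty$-cochains with real values form the algebraic dual of the $\ell_1$-chain complex, so $\bar H^*_{(\infty)}(X;\R)$ is essentially the $\ell_1$-homology of $X$ (the universal cover of a finite-skeleton $K(\Gamma,1)$) with its Banach-space structure; vanishing of $\ell_\infty$-cohomology in degree $i$ should follow from exactness of the $\ell_1$-chain complex in degree $i$ up to closure of the boundary image, and that exactness is exactly what a coarse/linear isoperimetric inequality buys you.

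The key geometric input is that a hyperbolic group satisfies a \emph{linear isoperimetric inequality} in all dimensions in the following strong sense: there is a bounded linear contracting homotopy, i.e. operators $h_n : C_n(X)\to C_{n+1}(X)$ with $\partial h + h\partial = \mathrm{id}$ (above the fundamental class) that are bounded with respect to the $\ell_1$ norms on chains. This is the higher-dimensional filling property of hyperbolic groups (the Rips complex of a hyperbolic group is contractible, and the contraction can be chosen to move each simplex a bounded distance and to fill with controlled volume); concretely one uses that $\Gamma$ acts on a finite-dimensional contractible Rips-type complex with a cocompact action and that cones in $\delta$-hyperbolic spaces have linearly bounded filling. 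Dualizing a bounded contracting homotopy $h$ gives a bounded operator $h^* : C^{n+1}_{(\infty)}(X;\R)\to C^n_{(\infty)}(X;\R)$ with $\delta h^* + h^*\delta = \mathrm{id}$ on cochains in the relevant degrees, which immediately forces $\bar H^i_{(\infty)}(X;\R)=0$ for all $i$ where the homotopy is defined. Since the Rips complex is contractible in all dimensions and $\Gamma$ has a finite-type classifying space, one gets vanishing for all $i\ge 1$; the failure at $i=1$ in the literature is because one must quotient by the image of $\delta$ from degree $0$, where $\ell_\infty$-functions on a non-amenable group behave differently — but the statement here only claims $i>1$, so this is not an obstacle.

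First I would set up the chain-level picture precisely: pick a finite-skeleton model of $K(\Gamma,1)$, pass to the universal cover $X$, and recall that by $\ell_\infty$-cohomology invariance $H^*_{(\infty)}(\Gamma)$ can be computed from $X$ or equivalently from the Rips complex $P_d(\Gamma)$ for $d$ large (both are $\Gamma$-cocompact with finite stabilizers, so their $\ell_1$-chain complexes are boundedly chain homotopy equivalent — this uses that $\Gamma$-equivariant chain maps between cocompact complexes are automatically $\ell_1$-bounded). Then I would construct the bounded contracting homotopy on $C_*(P_d(\Gamma))$ using $\delta$-hyperbolicity: cone off each simplex to a basepoint along geodesics, observe that in a $\delta$-hyperbolic space the cone of a simplex of bounded diameter has bounded diameter, so the coning operator only involves simplices within a uniformly bounded distance, hence is $\ell_1$-bounded. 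Finally, dualize to conclude $\bar H^i_{(\infty)}(\Gamma;\R)=0$ for $i\ge 1$, and apply Proposition~\ref{ger} to get the integral statement for $i>1$ (noting that for $i\ge 2$ reduced and unreduced $\ell_\infty$-cohomology coincide).

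The main obstacle I expect is making the \emph{boundedness} of the contracting homotopy rigorous: the naive geodesic coning in the Rips complex is well-defined and gives a homotopy, but checking that the induced map on cellular chains has bounded $\ell_1$ operator norm requires knowing that only boundedly many simplices get mapped near any given simplex, which is where one genuinely uses the thin-triangles condition (an Euclidean cone would fail this). An alternative route that sidesteps explicit coning is to quote the known fact that hyperbolic groups are of "bounded cohomological type" / have "uniformly finite" resolutions, or to use the contractibility of the Rips complex together with a standard averaging argument over the cocompact action — but either way the crux is the quantitative filling estimate coming from hyperbolicity, and that is the step deserving the most care.
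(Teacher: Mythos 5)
Your proposal follows essentially the same route as the paper: the paper's entire proof is to cite Mineyev's result that $H^i_{(\infty)}(\Gamma;\R)=0$ for $i>1$ (which is exactly the higher-dimensional linear isoperimetric inequality / bounded contracting homotopy statement you outline) and then apply Proposition~\ref{ger} to pass to $\Z$ coefficients. The only difference is that you sketch the proof of the real-coefficient case rather than quoting \cite{Mi}, and you correctly identify the quantitative filling estimate as the step where the actual work lies.
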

This theorem was proven in \cite{Mi}  for coefficients in $\R$. In view of   Proposition~\ref{ger}
it holds true for  coefficients in $\Z$.

\section{The $C_0$ Higson Compactification}

\begin{Def} Let $X$ be a set and consider the product $X\times X$. A collection of sets $\mathcal E =\{E\}\subseteq P(X\times X)$ is a {\em coarse structure} (\cite{HR},\cite{Ro2}) if it satisfies the following conditions:
\begin{itemize}
\item It contains the diagonal $D=\{(x,x): x\inn X\}$.
\item If $A\inn \mathcal E$, $B\subseteq A$ then $B\inn \mathcal E$.
\item If $A\inn \mathcal E$ then the set $A^{-1}=\{(y,x):(x,y)\inn A\}\inn\mathcal E$.
\item If $A,B\inn \mathcal E$ then  $A\circ B = \{(x,y): \exists z\inn X \,\text{with}\, (x,z)\inn A\,\text{and}\  
(z,y) \inn B\}\inn\mathcal E.$ 
\item If $A,B\inn \mathcal E$ then $A\cup B\inn \mathcal E$.
\end{itemize}
\end{Def}

EXAMPLE. The {\em bounded coarse structure} $\mathcal E_b$ on a metric space $X$ is the collection of  all sets 
$E\subset X\times X$ lying in a bounded distance from the diagonal.

\begin{defin}\cite{Ro2} Let $(X,d)$ be a metric space. The {\em $C_0$ coarse structure} on $X$ consists of all sets $E\subset X\times X$ such that
$\forall \epsilon>0\,
\ \exists\ K\subseteq X$, a compact subset with $d(x,y)<\epsilon$ for all $(x,y)\inn E\setminus(K\times K)$.
\end{defin}

\begin{defin} Let $(X,d)$ be a metric space with the $C_0$ coarse structure. Let $f:X\to \C$ be a bounded, continuous function. Denote by $\textbf{d} f$ the function: \[ \textbf{d} f:X\times X \to \C\] defined by the formula $f(x,y)=f(x)-f(y)$.

We will say that $f$ is a Higson function if for every controlled set $E$ the restriction of $\textbf{d} f$ to E vanishes at infinity.
\end{defin}

Let $C_{h_0}(X)$ denote the set of all bounded, continuous Higson functions.

\begin{defin} The compactification $h_0X$  of a metric space $X$ characterized by the property $C(h_0X)=C_{h_0}(X)$ is called the $C_0$ Higson compactification. Its boundary $h_0X\setminus X$ is denoted by $\nu_0 X$ and is called the {\em $C_0$ Higson corona}.
\end{defin}
Note that in the definition of the $C_0$ Higson compactification  the complex numbers $\C$ can be replaced by the reals $\R$.

We use notation $\|x\|=d_X(x,x_0)$ where $X$ is a metric space with a based point.

We recall that a map $f:X\to Y$ between metric spaces is called uniformly continuous if there is a monotone function $\omega:[0,\infty)\to[0,\infty)$
called {\em modulus of continuity} with $\lim_{t\to 0}\omega(t)=0$ such that $d_Y(f(x),f(x'))\le\omega(d_X(x,x'))$ for all $x,x'\in X$.

\begin{thm}\label{unifTh} Let $X$ be a proper geodesic metric space. Then $f\inn C_{h_0}(X)$ if and only if it is uniformly continuous and bounded.
\end{thm}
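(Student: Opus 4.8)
The plan is to establish the two implications separately, directly from the definitions; the properness of $X$ carries the argument (the geodesic hypothesis does not seem to be needed for this particular statement). For the direction ``uniformly continuous and bounded $\Rightarrow f\inn C_{h_0}(X)$'', let $\omega$ be a modulus of continuity for $f$ and let $E$ be any set in the $C_0$ coarse structure. Given $\delta>0$, I would choose $\epsilon>0$ small enough that $\omega(\epsilon)<\delta$; by the definition of the $C_0$ coarse structure there is a compact $K\subseteq X$ with $d(x,y)<\epsilon$ for all $(x,y)\in E\setminus(K\times K)$, hence $|\mathbf{d}f(x,y)|=|f(x)-f(y)|\le\omega(d(x,y))<\delta$ there. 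Thus $\mathbf{d}f|_E$ vanishes at infinity, and since $f$ is assumed bounded and continuous, $f\inn C_{h_0}(X)$. This direction is routine and uses nothing beyond the definitions.

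For the converse I would argue by contradiction: suppose $f\inn C_{h_0}(X)$ but $f$ is not uniformly continuous, so there are $\delta_0>0$ and points $x_n,y_n\in X$ with $d(x_n,y_n)\to 0$ and $|f(x_n)-f(y_n)|\ge\delta_0$ for all $n$. The first step is to force the pairs out to infinity: if $(x_n)$ had a bounded subsequence, properness would yield a convergent sub-subsequence $x_{n_k}\to x_*$, whence $y_{n_k}\to x_*$ as well (since $d(x_{n_k},y_{n_k})\to 0$), and continuity of $f$ would give $|f(x_{n_k})-f(y_{n_k})|\to 0$, contradicting $\ge\delta_0$; hence $\|x_n\|\to\infty$, and likewise $\|y_n\|\to\infty$. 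The second step is to check that $E:=D\cup\{(x_n,y_n),(y_n,x_n):n\in\N\}$ belongs to the $C_0$ coarse structure: given $\epsilon>0$, all but finitely many of the defining pairs already satisfy $d(x_n,y_n)<\epsilon$, and the finitely many exceptional pairs are absorbed by letting $K$ be a closed ball (compact, by properness) that contains the finite set of their coordinates, so every pair of $E\setminus(K\times K)$ is either on the diagonal or within distance $\epsilon$. The third step is to feed this $E$ into the Higson property of $f$: $\mathbf{d}f|_E$ vanishes at infinity, so there is a compact $K'$ with $|\mathbf{d}f|<\delta_0$ on $E\setminus(K'\times K')$; since $\|x_n\|\to\infty$ we have $x_n\notin K'$ for all large $n$, hence $(x_n,y_n)\in E\setminus(K'\times K')$ and $|f(x_n)-f(y_n)|<\delta_0$ — a contradiction. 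Therefore $f$ is uniformly continuous, and it is bounded by the definition of $C_{h_0}(X)$.

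The only place that requires any thought is the converse, and there the single real idea is that ``bounded-scale'' oscillation of $f$ is destroyed by continuity together with properness, while ``large-scale'' oscillation can be packaged into a genuinely $C_0$-controlled set of pairs; after that it is just bookkeeping. Consequently I do not anticipate a serious obstacle, which is consistent with the fact that the statement goes through in full generality for proper (geodesic) metric spaces.
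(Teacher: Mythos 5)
Your proof is correct and follows essentially the same route as the paper: the forward implication via the modulus of continuity, and the converse by packaging a failure of uniform continuity into a $C_0$-controlled set of pairs. You are in fact slightly more careful than the paper, which asserts the contradiction with $\mathbf{d}f|_E\to 0$ without first ruling out that the bad pairs $(x_n,y_n)$ accumulate in a compact set; your use of properness and continuity of $f$ to force $\|x_n\|\to\infty$ fills that small gap (and your observation that the geodesic hypothesis is not needed here is also accurate).
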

\begin{proof} Let $f\in  C_{h_0}(X)$ and assume that $f$ is not uniformly continuous. Hence there is $\epsilon>0$ and sequences $x_n$ and $y_n$
with $d(x_n,y_n)<1/n$ and with $|f(x_n)-f(y_n)|\ge \epsilon$. Clearly, $E=\{(x_n,y_n)\}$ is a controlled set. We obtain a contradiction with the condition
$\textbf{d}f|_E\to 0$.

Let $f:X\to\C$ be a uniformly continuous bounded function with a modulus of continuity $\omega$
and let $E$ be a controlled set for $C_0$. Then the inequality
 $|f(x)-f(x')|\le\omega(d_X(x,x'))$ implies that $\textbf{d}f|_E\to 0$.
\end{proof}
\begin{cor}\label{uniform}
A map to a compact metric space $f:X\to Y$ continuously extends to the corona $\nu_0X$ if and only if it is uniformly continuous.
\end{cor}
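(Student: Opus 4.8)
The plan is to deduce this directly from Theorem~\ref{unifTh} together with the universal property of the Higson compactification. The key observation is that a map $f:X\to Y$ into a compact metric space extends continuously over $h_0X$ (equivalently, over the corona $\nu_0X$, since $X$ is already open and dense in $h_0X$) if and only if $g\circ f\in C_{h_0}(X)$ for every bounded continuous function $g:Y\to\C$. Indeed, the ``only if'' direction is immediate: if $\bar f:h_0X\to Y$ extends $f$, then for any continuous $g:Y\to\C$ the composite $g\circ\bar f$ is continuous on $h_0X$, hence its restriction $g\circ f$ lies in $C_{h_0}(X)=C(h_0X)|_X$. For the ``if'' direction one uses that $Y$, being a compact metric space, embeds in a countable product of intervals (or in the Hilbert cube); writing $f$ in coordinates as $(f_j)$ with each $f_j:X\to[0,1]$ a bounded continuous function, the hypothesis $f_j\in C_{h_0}(X)$ gives an extension $\bar f_j:h_0X\to[0,1]$, and these assemble to a continuous map $h_0X\to\prod[0,1]$ whose image is the closure of $f(X)$, which lies in $Y$; this produces the desired $\bar f:h_0X\to Y$.

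Granting this reformulation, the Corollary follows by applying Theorem~\ref{unifTh} coordinatewise. If $f:X\to Y$ is uniformly continuous, then for any bounded continuous $g:Y\to\C$ the composite $g\circ f$ is bounded and, since $g$ restricted to the compact set $Y$ is uniformly continuous, $g\circ f$ is again uniformly continuous; by Theorem~\ref{unifTh} (using that $X$ is a proper geodesic metric space) we get $g\circ f\in C_{h_0}(X)$, so $f$ extends over $\nu_0X$. Conversely, if $f$ extends continuously over the corona, then in particular each coordinate function $f_j=g_j\circ f$ (for $g_j:Y\to[0,1]$ the $j$-th coordinate of an embedding of $Y$ into the Hilbert cube) lies in $C_{h_0}(X)$, hence is uniformly continuous by Theorem~\ref{unifTh}; since the embedding $Y\hookrightarrow\prod[0,1]$ is a homeomorphism onto its (compact) image, with uniformly continuous inverse on that image, the map $f$ itself is uniformly continuous.

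The only mildly delicate point is the passage from ``each coordinate $f_j$ is uniformly continuous'' back to ``$f$ is uniformly continuous'': this uses that a continuous bijection between compact metric spaces is a uniform homeomorphism, so the moduli of continuity of the $f_j$ can be combined into a single modulus for $f$. Everything else is a formal consequence of Theorem~\ref{unifTh} and the universal property characterizing $h_0X$ by $C(h_0X)=C_{h_0}(X)$. I expect the statement and its proof to be short; the substantive content was already carried by Theorem~\ref{unifTh}.
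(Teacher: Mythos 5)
Your proof is correct, and the ``only if'' (uniformly continuous $\Rightarrow$ extendible) half is exactly the paper's argument: embed $Y$ into the Hilbert cube $I^{\omega}$, note each coordinate $\pi_i\circ j\circ f$ is uniformly continuous, extend coordinatewise via Theorem~\ref{unifTh}, and assemble. Where you diverge is the converse. The paper argues by contradiction: assuming $f$ extends but is not uniformly continuous, it takes sequences $x_n,y_n$ with $d(x_n,y_n)<1/n$ and $d_Y(f(x_n),f(y_n))\ge\epsilon$, uses compactness of $Y$ to extract limits $u\ne v$, and applies a single continuous separating function $\phi:Y\to[0,1]$ with $\phi(u)=0$, $\phi(v)=1$; then $\phi\circ f$ extends to the corona, hence is uniformly continuous by Theorem~\ref{unifTh}, contradicting $\phi(f(x_n))\to 0$ and $\phi(f(y_n))\to 1$. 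Your converse instead deduces uniform continuity of every coordinate $f_j$ and reassembles: this works, but the ``mildly delicate point'' you flag really consists of two separate steps --- first combining the countably many moduli into one modulus for $j\circ f:X\to I^{\omega}$ (which needs the weighted product metric $\sum_j 2^{-j}|\cdot|$ and a finite truncation argument, not just the compactness of $Y$), and only then invoking that $j^{-1}$ is uniformly continuous on the compact image $j(Y)$. Both steps are standard and your argument goes through; the paper's contradiction-plus-Urysohn route is arguably cleaner precisely because it needs only one auxiliary function and so avoids the modulus-combination issue altogether, while your version is more constructive and makes the universal property $C(h_0X)=C_{h_0}(X)$ explicit.
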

\begin{proof} Let $j:Y\to I^{\omega}$ be an imbedding. Clearly, $j$ is uniformly continuous.
Suppose that $f:X\to Y$ is uniformly continuous. Then every function $f_i=\pi_i\circ j\circ f$ is uniformly continuous
as a composition of such where $\pi_i:I^{\omega}\to i$ is the projection to the $i$th coordinate. Then every $f_i$ is extendible to
$\nu_0X$. These extensions together define a continuous extension $\bar f:h_0X\to Y$ of $j\circ f$.

Assume that $f$  is extendible over $\nu_0X$ and it is not uniformly continuous. Hence there is $\epsilon>0$ and sequences $x_n$ and $y_n$
with $d(x_n,y_n)<1/n$ and with $|f(x_n)-f(y_n)|\ge \epsilon$. In view of compactness of $Y$ and using a subsequences we may assume that
$\lim f(x_n)=u$ and $\lim f(y_n)=v$. Let $\phi:Y\to[0,1]$ be a continuous function with $\phi(u)=0$ and $\phi(v)=1$. Note that
$\phi\circ f$  is  extendable to the corona $\nu_0X$. By Theorem~\ref{unifTh} it is uniformly continuous. We obtained a contradiction.
\end{proof}

We make use of the following fact~\cite{Ra}.
\begin{thm}
Every uniformly continuous bounded function on a metric space  $f:X\to\R$ is a uniform limit of Lipschitz functions.
\end{thm}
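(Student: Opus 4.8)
The plan is to produce the approximating Lipschitz functions explicitly via inf-convolution with a large multiple of the metric, and then estimate the error using the modulus of continuity together with boundedness.

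Set $M=\sup_{x\in X}|f(x)|<\infty$ and, for each positive integer $n$, define
\[
f_n(x)=\inf_{y\in X}\bigl(f(y)+n\,d(x,y)\bigr).
\]
Taking $y=x$ in the infimum gives $f_n(x)\le f(x)$, while $f(y)+n\,d(x,y)\ge f(y)\ge -M$ gives $f_n(x)\ge -M$; hence $f_n$ is a well-defined real-valued function with $-M\le f_n\le f\le M$. The first thing to check is that $f_n$ is $n$-Lipschitz. For fixed $y$ the map $x\mapsto f(y)+n\,d(x,y)$ is $n$-Lipschitz by the triangle inequality, and the pointwise infimum of a family of $n$-Lipschitz functions that is bounded below is again $n$-Lipschitz: for $x,x'\in X$ and any $y$ we have $f_n(x)\le f(y)+n\,d(x,y)\le \bigl(f(y)+n\,d(x',y)\bigr)+n\,d(x,x')$, and taking the infimum over $y$ on the right yields $f_n(x)\le f_n(x')+n\,d(x,x')$, and symmetrically.

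It remains to show $f_n\to f$ uniformly. Since $f_n\le f$, it suffices to bound
\[
f(x)-f_n(x)=\sup_{y\in X}\bigl(f(x)-f(y)-n\,d(x,y)\bigr)
\]
uniformly in $x$. Fix $\epsilon>0$; by uniform continuity of $f$ (take $\delta>0$ with $\omega(\delta)\le\epsilon$, where $\omega$ is a modulus of continuity) we have $|f(x)-f(y)|\le\epsilon$ whenever $d(x,y)<\delta$. For a given $x$ split the supremum over $y$ into two cases. If $d(x,y)<\delta$ then $f(x)-f(y)-n\,d(x,y)\le f(x)-f(y)\le\epsilon$. If $d(x,y)\ge\delta$ then $f(x)-f(y)-n\,d(x,y)\le 2M-n\delta$, which is $\le\epsilon$ as soon as $n\ge 2M/\delta$. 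Hence for every $n\ge 2M/\delta$ we get $0\le f(x)-f_n(x)\le\epsilon$ for all $x\in X$, i.e. $\|f-f_n\|_\infty\le\epsilon$. Since each $f_n$ is Lipschitz, $f$ is a uniform limit of Lipschitz functions.

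The argument is essentially routine; the only point needing care is the uniform convergence estimate, and the mild obstacle there is the dichotomy in the last step — one must observe that on the region $d(x,y)\ge\delta$ the penalty term $n\,d(x,y)$ eventually dominates the (bounded) oscillation of $f$ uniformly in $x$, which is exactly where boundedness of $f$ enters.
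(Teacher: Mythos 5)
Your proof is correct: the inf-convolution $f_n(x)=\inf_{y}\bigl(f(y)+n\,d(x,y)\bigr)$ is the standard Pasch--Hausdorff/McShane-type regularization, and each step (the $n$-Lipschitz bound via the triangle inequality, the lower bound from boundedness of $f$, and the dichotomy $d(x,y)<\delta$ versus $d(x,y)\ge\delta$ in the uniform convergence estimate) is carried out accurately. The paper itself offers no proof of this statement --- it is quoted as a known fact with a citation to Rihaoui --- so there is no argument to compare against; your self-contained proof is a perfectly adequate substitute for the reference.
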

\begin{cor}\label{uniform limit}
Every uniformly continuous  map of a metric space  $f:X\to K$  to a finite complex is a uniform limit of Lipschitz maps.
\end{cor}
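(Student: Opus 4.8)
The plan is to bootstrap Corollary~\ref{uniform limit} from the scalar-valued statement (the cited theorem of Raymond/``\cite{Ra}'') to the case of maps into a finite complex $K$, by the same trick used in the proof of Corollary~\ref{uniform}: embed $K$ into a cube and argue coordinatewise, then push the resulting approximation back onto $K$ using a Lipschitz retraction of a neighborhood of $K$.

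First I would fix a simplicial structure on $K$ and a PL (hence Lipschitz) embedding $j\colon K\hookrightarrow I^N$ into a finite-dimensional cube, together with a closed regular neighborhood $V$ of $j(K)$ in $I^N$ and a PL (Lipschitz) retraction $r\colon V\to j(K)$; such data exist because $K$ is a finite complex. Given a uniformly continuous $f\colon X\to K$ with modulus $\omega$, the composite $j\circ f\colon X\to I^N\subset\R^N$ is uniformly continuous, so each coordinate function $(j\circ f)_i\colon X\to\R$ is uniformly continuous and bounded; by the cited theorem each $(j\circ f)_i$ is a uniform limit of Lipschitz functions $g_i^{(m)}$. Assembling, the maps $g^{(m)}=(g_1^{(m)},\dots,g_N^{(m)})\colon X\to\R^N$ are Lipschitz and converge uniformly to $j\circ f$. (One should also compose with a Lipschitz retraction $\R^N\to I^N$, which exists since $I^N$ is convex, so that the approximants land in $I^N$; this keeps them Lipschitz and does not spoil uniform convergence.)

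Next, since $j(K)$ is compact and $V$ is a neighborhood of it, there is $\delta>0$ with the $\delta$-neighborhood of $j(K)$ contained in $V$; because $g^{(m)}\to j\circ f$ uniformly and $j\circ f$ has image in $j(K)$, for all large $m$ we have $\dist(g^{(m)}(x),j(K))<\delta$ for every $x$, i.e.\ $g^{(m)}(X)\subset V$. Define $h^{(m)}=j^{-1}\circ r\circ g^{(m)}\colon X\to K$. Each $h^{(m)}$ is Lipschitz, being a composite of Lipschitz maps (here $j^{-1}\colon j(K)\to K$ is bi-Lipschitz since $j$ is a PL embedding of a finite complex). Finally, uniform continuity of $r$ on the compact set $V$ (it is Lipschitz there) together with $g^{(m)}\to j\circ f$ uniformly gives $r\circ g^{(m)}\to r\circ (j\circ f)=j\circ f$ uniformly, and applying the bi-Lipschitz map $j^{-1}$ yields $h^{(m)}\to f$ uniformly. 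Hence $f$ is a uniform limit of Lipschitz maps into $K$.

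The only genuinely delicate point is arranging the Lipschitz retraction onto (a neighborhood of) $K$: this is where the hypothesis that $K$ is a \emph{finite} complex is essential, since one needs a single global Lipschitz retraction $r\colon V\to j(K)$ of a regular neighborhood, which is supplied by PL topology (a finite simplicial complex embedded as a subcomplex of a triangulated cube has a simplicial, hence Lipschitz, regular-neighborhood retraction). Everything else — coordinatewise approximation, composing with Lipschitz retractions of convex sets, and propagating uniform convergence through Lipschitz maps — is routine.
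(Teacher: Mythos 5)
Your argument is correct and is essentially the paper's own proof: embed $K$ in Euclidean space, apply the scalar approximation theorem coordinatewise, and compose with a Lipschitz retraction of a regular neighborhood back onto $K$. The only differences (cube versus simplex, the explicit bi-Lipschitz inverse of the embedding) are cosmetic.
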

\begin{proof}
Let $K\subset\Delta^{N-1}\subset\R^N$ be realized in a unit simplex and let $r:W\to K$ be a retraction of a regular neighborhood. We may assume that
$r$ is 2-Lipschitz. By the above theorem $f=(f_1,\dots, f_N)$ where $f_i=\lim g^i_k$ with Lipschitz functions $g^i_k$.
For sufficiently large $k$ we define $g_k=r\circ(g^1_k,\dots,g^N_k)$ . Clearly $f$ is the uniform limit of $g_k$
and each $g_k$ is Lipschitz.
\end{proof}

\section{Lipschitz obstruction theory}

We recall the main theorem of obstruction theory in the simply connected case~\cite{DK}.

\begin{thm}\label{class} Let $L(X,A)$ be a relative CW complex and let $K$ be a  simply connected complex.
Let $f_n:X^{(n)}\cup A\to K$ be a continuous map. 

1. There is a cellular cocycle $\Theta(f)\in C^{n+1}(X,A;\pi_n(K))$
which vanishes if and only if $f$ extends to a map $f_{n+1}:X_{n+1}\to K$.

2. The cohomology class $[\Theta(f)]\in H^{n+1}(X,A;\pi_n(K))$ vanishes if and only if the restriction
$f_{n-1}$ of $f_n$ to $X^{(n-1)}\cup A$ extends to a map $f_{n+1}:X_{n+1}\to K$.
\end{thm}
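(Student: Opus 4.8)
The plan is to reproduce the classical cell-by-cell construction of obstruction cochains (as carried out in \cite{DK}), of which I sketch here the form in which we use it. First I would build the obstruction cochain: for each $(n+1)$-cell $e$ of $X$ not contained in $A$, with attaching map $\varphi_e\colon S^n\to X^{(n)}\cup A$, set $\Theta(f)(e)=[f_n\circ\varphi_e]\in\pi_n(K)$. Since $K$ is simply connected, based and free homotopy classes of maps $S^n\to K$ agree, so this is well defined independently of basepoints and connecting paths, and it extends linearly to $\Theta(f)\in C^{n+1}(X,A;\pi_n(K))=\Hom(C_{n+1}(X,A),\pi_n(K))$. The extension criterion of part~1 is then immediate: $f_n$ extends over a single cell $e$ if and only if $f_n\circ\varphi_e$ bounds, i.e.\ is nullhomotopic, i.e.\ $\Theta(f)(e)=0$; and because distinct $(n+1)$-cells are attached along disjoint open cells, $f_n$ extends over all of $X^{(n+1)}\cup A$ exactly when $\Theta(f)=0$.

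Next I would verify that $\Theta(f)$ is a cocycle, which is needed for $[\Theta(f)]\in H^{n+1}(X,A;\pi_n(K))$ in part~2 to make sense. For $n\le 1$ this is vacuous, as $\pi_1(K)=0$. For $n\ge 2$, using the relative Hurewicz theorem one identifies $C_{n+1}(X,A)\cong\pi_{n+1}(X^{(n+1)}\cup A,\,X^{(n)}\cup A)$, under which $\Theta(f)$ factors as the connecting homomorphism of the pair followed by $(f_n)_*$:
\[
C_{n+1}(X,A)\;\cong\;\pi_{n+1}\bigl(X^{(n+1)}\cup A,\,X^{(n)}\cup A\bigr)\;\xrightarrow{\ \partial\ }\;\pi_n\bigl(X^{(n)}\cup A\bigr)\;\xrightarrow{(f_n)_*}\;\pi_n(K).
\]
Since, under the analogous identifications, the cellular boundary $C_{n+2}(X,A)\to C_{n+1}(X,A)$ is the triple connecting map $\pi_{n+2}(X^{(n+2)}\cup A,X^{(n+1)}\cup A)\to\pi_{n+1}(X^{(n+1)}\cup A)$ followed by the map into the relative group $\pi_{n+1}(X^{(n+1)}\cup A,X^{(n)}\cup A)$, the composite $\delta\Theta(f)$ runs through two consecutive arrows of the long exact sequence of the pair $(X^{(n+1)}\cup A,\,X^{(n)}\cup A)$ and hence vanishes. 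Equivalently, one argues directly from exactness of the homotopy sequences of the triples $(X^{(n+2)}\cup A,\,X^{(n+1)}\cup A,\,X^{(n)}\cup A)$.

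For part~2 the tool is the difference cochain. Given maps $g,g'\colon X^{(n)}\cup A\to K$ agreeing on $X^{(n-1)}\cup A$, gluing their restrictions to an $n$-cell $e$ along the common boundary sphere defines $d(g,g')(e)\in\pi_n(K)$, hence a cochain $d(g,g')\in C^n(X,A;\pi_n(K))$. I would record its two standard properties: (i) for any $g$ and any $\eta\in C^n(X,A;\pi_n(K))$ there is a $g'$ agreeing with $g$ on $X^{(n-1)}\cup A$ with $d(g,g')=\eta$ — obtained by precomposing $g$ on each $n$-cell with a pinch map $D^n\to D^n\vee S^n$ and placing a representative of $\eta(e)$ on the sphere factor; and (ii) the coboundary formula $\Theta(g')-\Theta(g)=\delta\,d(g,g')$. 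Granting these, if $[\Theta(f_n)]=0$ write $\Theta(f_n)=\delta\eta$, use (i) to produce $f_n'$ agreeing with $f_n$ on $X^{(n-1)}\cup A$ and with $d(f_n,f_n')=-\eta$, and conclude $\Theta(f_n')=0$ by (ii); thus $f_n'$, which restricts to $f_{n-1}$ on $X^{(n-1)}\cup A$, extends further to $X^{(n+1)}\cup A$. Conversely, if $f_{n-1}$ extends to $f_{n+1}\colon X^{(n+1)}\cup A\to K$, then $g':=f_{n+1}|_{X^{(n)}\cup A}$ has $\Theta(g')=0$ and agrees with $f_n$ on $X^{(n-1)}\cup A$, so by (ii) $\Theta(f_n)=\Theta(f_n)-\Theta(g')=\delta\,d(g',f_n)$ is a coboundary, i.e.\ $[\Theta(f_n)]=0$.

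The step requiring genuine care is the coboundary formula (ii), together with the verification that the cellular boundary is exactly the composite of triple connecting homomorphisms used above: these are geometric computations with the attaching maps of $(n+1)$-cells that must be performed with consistent orientations and basepoints, and they are precisely the arguments in~\cite{DK}. Accordingly, in the paper it suffices to invoke that reference; the outline above only records why the statement holds in the precise form, involving $\Theta(f)$ and $[\Theta(f)]$, in which it is applied in the sequel.
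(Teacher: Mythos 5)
Your proposal is correct and is exactly the standard obstruction-cochain/difference-cochain argument: the paper itself gives no proof of this statement, recalling it as the main theorem of obstruction theory and citing \cite{DK}, and your reconstruction is precisely the proof found there (including the one point needing care, namely that simple connectivity of $K$ makes the cochain independent of basepoints and kills the $\pi_1$-equivariance issue in identifying $C_{n+1}(X,A)$ with the relative homotopy group). Nothing further is needed.
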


A similar theorem holds true for a Lipschitz extension problem. To make a precise statement
we consider a geodesic metric on CW complexes such that there are finitely many isomorphism types
of cells. We call such metric {\em unifom}.
The following proposition shows that the obstruction cocycle in the case of Lipschitz maps is  bounded.
\begin{prop}\label{lip-norm} Let $f:S^n\to K$ be a $\lambda$-Lipschitz map from the n-dimensional sphere to a finite simplicial complex K. 
Suppose that $\pi_n(K)$ is a normed abelian group. Then $\ \exists\  b>0$ such that $\|f_\ast\|\leq b$.
\end{prop}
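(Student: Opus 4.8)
The plan is to deduce the statement from the fact that, for a fixed Lipschitz constant $\lambda$ and fixed uniform metrics on $S^n$ and on $K$, only finitely many homotopy classes in $\pi_n(K)$ are represented by $\lambda$-Lipschitz maps $S^n\to K$. Once that is known, one simply sets $b$ to be the maximum of the (finitely many) norms $\|[f]\|$ over those classes; here $f_\ast$ denotes the class $[f]\in\pi_n(K)$ of $f$, which is well defined since in the obstruction-theoretic setting $K$ is taken simply connected, so free and based homotopy classes of maps from $S^n$ agree. That $b$ depends on $\lambda$ is harmless and in fact necessary: in the application an $(n+1)$-cell of a uniform complex is attached along an $n$-sphere of uniformly bounded size, so the restriction to it of a globally $\mu$-Lipschitz map is $\lambda$-Lipschitz with $\lambda$ controlled by $\mu$.

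To prove the finiteness, equip $S^n$ with its uniform geodesic metric and $K$ with the geodesic metric coming from its finitely many isomorphism types of cells, so that both are compact metric spaces, and let $\mathcal F_\lambda\subset C(S^n,K)$ be the set of all $\lambda$-Lipschitz maps, with the sup-metric $\rho(g,h)=\sup_{x}d_K(g(x),h(x))$. Then $\mathcal F_\lambda$ is compact: it is (uniformly) equicontinuous and pointwise takes values in the compact space $K$, so by the Arzel\`a--Ascoli theorem it is precompact, and it is closed because a uniform limit of $\lambda$-Lipschitz maps is again $\lambda$-Lipschitz.

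Now use that a finite simplicial complex is a compact ANR: there is an $\epsilon>0$ such that any two maps $g,h\colon S^n\to K$ with $\rho(g,h)<\epsilon$ are homotopic. Concretely, realize $K\subset\Delta^{N-1}\subset\R^N$ and take a retraction $r\colon W\to K$ of a regular neighborhood, exactly as in the proof of \corref{uniform limit}; choose $\epsilon$ small enough that whenever $\rho(g,h)<\epsilon$ the straight-line homotopy $(1-t)g+th$ stays inside $W$, and then $r\bigl((1-t)g+th\bigr)$ is a homotopy from $g$ to $h$ (basepoint-preserving if $g$ and $h$ agree on the basepoint). Cover the compact space $\mathcal F_\lambda$ by finitely many open $\rho$-balls of radius $\epsilon/2$; any two maps lying in the same ball are $\epsilon$-close, hence homotopic, so $\mathcal F_\lambda$ meets only finitely many homotopy classes. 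Therefore $\{f_\ast=[f]\in\pi_n(K): f\in\mathcal F_\lambda\}$ is finite, and $b=\max\{\|f_\ast\| : f\in\mathcal F_\lambda\}$ is a finite positive number with $\|f_\ast\|\le b$ for every $\lambda$-Lipschitz $f\colon S^n\to K$.

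The only real content here is the combination of Arzel\`a--Ascoli compactness with the ANR near-homotopy principle, and both are standard, so I do not expect a serious obstacle; the one point to state carefully is the dependence of $b$ on $\lambda$. (Alternatively, and more constructively, one can subdivide $S^n$ finely enough — the number of barycentric subdivisions depending only on $\lambda$ and on the Lebesgue number of the open star cover of $K$ — that every $\lambda$-Lipschitz map satisfies the star condition and hence admits a simplicial approximation on that single fixed triangulation; since there are only finitely many simplicial maps between two fixed finite complexes, only finitely many homotopy classes occur, and one concludes as above.)
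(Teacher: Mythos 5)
Your proof is correct and follows essentially the same route as the paper: the paper also works with the compact space of $\lambda$-Lipschitz maps and observes that $g\mapsto\|[g]\|$ is continuous (i.e.\ locally constant) on it, so its image is compact, hence bounded. You have merely made explicit the two ingredients the paper leaves implicit — Arzel\`a--Ascoli for compactness and the ANR near-homotopy principle for local constancy of the homotopy class — which is a faithful unpacking rather than a different argument.
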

\begin{proof}
Consider the space $\lambda\text{-}Map(S^n,K)$ of $\lambda$-Lipschitz maps $g:S^n\to K$. This space is compact. 
Then consider the maps

\[\lambda\text{-}Map(S^n,K)\stackrel{\phi}\to [S^n,K]=\pi_n(K) \stackrel{\|.\|}\to \Z\] where $\phi(g)=[g]$. Clearly, the 
composition $\Phi$ of these maps  is continuous. Since $\lambda\text{-}Map(S^n,K)$ is compact 
$\Phi(\lambda\text{-}Map(S^n,K))$ is compact and thus bounded. 
\end{proof}

\begin{thm}\label{lip}
Let $(X,A)$ be a relative uniform cellular complex and let $K$ be a  simply connected finite complex with a fixed metric.
Let $f_n:X^{(n)}\cup A\to K$ be a Lipschitz map. 

1. There is a bounded cellular cocycle $\Theta_L(f)\in C^{n+1}_{(\infty)}(X,A;\pi_n(K))$
which vanishes if and only if $f$ extends to a Lipschitz map $f_{n+1}:X_{n+1}\to K$.

2. The cohomology class $[\Theta_L(f)]\in H^{n+1}_{(\infty)}(X,A;\pi_n(K))$ vanishes if and only if the restriction
$f_{n-1}$ of $f_n$ to $X^{(n-1)}\cup A$ extends to a Lipschitz map $f_{n+1}:X_{n+1}\to K$.
\end{thm}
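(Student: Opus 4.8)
The idea is to run classical obstruction theory (Theorem~\ref{class}) cell-by-cell over the relative complex $(X,A)$ while keeping uniform control on everything. First I would fix a cellular orientation and a set of characteristic maps for the cells of $X$; since $(X,A)$ is a \emph{uniform} relative complex there are only finitely many isometry types of cells, and we may assume each $(n+1)$-cell $e^{n+1}$ has a characteristic map $\chi_e$ whose restriction to $S^n=\partial e^{n+1}$ is bilipschitz onto the attaching sphere. Given the Lipschitz map $f_n:X^{(n)}\cup A\to K$, the obstruction cochain is the usual one: for each oriented $(n+1)$-cell $e$, set $\Theta_L(f)(e)=[f_n\circ\chi_e|_{S^n}]\in\pi_n(K)$. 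This is exactly the cocycle $\Theta(f)$ of Theorem~\ref{class}, so it is automatically a cocycle and vanishes on $e$ iff $f_n$ extends over $e$; the only new content is that $\Theta_L(f)$ lies in $C^{n+1}_{(\infty)}(X,A;\pi_n(K))$.

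**Boundedness.** For the $\ell_\infty$ bound I would invoke Proposition~\ref{lip-norm}. Because $f_n$ is $\lambda$-Lipschitz on $X^{(n)}\cup A$ for some $\lambda$, and because the characteristic maps $\chi_e|_{S^n}$ come from only finitely many isometry types of cells (so they are uniformly bilipschitz, say with constant $\mu$), each composite $f_n\circ\chi_e|_{S^n}:S^n\to K$ is $\lambda\mu$-Lipschitz with the same constant for every cell $e$. Proposition~\ref{lip-norm} then gives a single $b>0$ with $\|[f_n\circ\chi_e|_{S^n}]\|\le b$ for all $e$, i.e. $|\Theta_L(f)(e)|\le b$ for all $e\in E_{n+1}(X)$. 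Hence $\Theta_L(f)\in C^{n+1}_{(\infty)}(X,A;\pi_n(K))$, proving part~1.

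**From cochain to class (part 2).** For the second statement I would follow the classical argument: $[\Theta_L(f)]=0$ in $H^{n+1}_{(\infty)}$ means $\Theta_L(f)=\delta c$ for some $c\in C^n_{(\infty)}(X,A;\pi_n(K))$, and one uses $c$ to modify $f_n$ on the interiors of the $n$-cells (leaving $f_{n-1}$ on $X^{(n-1)}\cup A$ unchanged) so that the new obstruction cochain vanishes. The key point is that this modification can be done \emph{Lipschitz-ly with a uniform constant}: changing $f_n$ on an $n$-cell $e^n$ by the element $c(e^n)\in\pi_n(K)$ amounts to forming a sum $f_n|_{e^n}\ \#\ g_{c(e^n)}$ where $g_{c(e^n)}:S^n\to K$ represents $c(e^n)$, collapsed into a small sub-ball of $e^n$. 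Since $c$ is bounded, the classes $c(e^n)$ range over a \emph{finite} subset $S\subset\pi_n(K)$ (finiteness of the norm's bounded sets requires $\pi_n(K)$ finitely generated, which holds as $K$ is a finite simply connected complex); fixing once and for all a Lipschitz representative $g_\sigma:S^n\to K$ for each $\sigma\in S$ gives a uniform Lipschitz bound on all the modifications, and uniformity of the cell structure means the collapse-and-graft operation has a uniformly bounded Lipschitz constant across all cells. The resulting map $f_n':X^{(n)}\cup A\to K$ is Lipschitz, agrees with $f_{n-1}$ on $X^{(n-1)}\cup A$, and has $\Theta_L(f')=0$, hence by part~1 extends to a Lipschitz map $f_{n+1}:X_{n+1}\to K$. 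Conversely, if $f_{n-1}$ extends Lipschitz to $f_{n+1}$, then restricting to $X^{(n)}\cup A$ gives a Lipschitz $f_n'$ homotopic to $f_n$ rel $X^{(n-1)}\cup A$ through Lipschitz maps, and the difference cochain $d(f_n,f_n')\in C^n_{(\infty)}$ (bounded by the same Proposition~\ref{lip-norm} reasoning applied to the homotopy) satisfies $\delta d(f_n,f_n')=\Theta_L(f)-\Theta_L(f')=\Theta_L(f)$, so $[\Theta_L(f)]=0$.

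**Main obstacle.** The routine part is checking that $\Theta_L(f)$ is a cocycle and that vanishing on a cell is equivalent to extendability over that cell — this is identical to Theorem~\ref{class}. The genuine work is the \emph{uniformity bookkeeping} in part~2: ensuring that the cochain-level modification of $f_n$, and the difference cochain extracted from a Lipschitz homotopy, are carried out with Lipschitz constants bounded independently of the cell. This rests on two finiteness facts — finitely many isometry types of cells (definition of uniform complex) and finiteness of bounded subsets of the finitely generated group $\pi_n(K)$ — together with the compactness-of-Lipschitz-maps argument packaged in Proposition~\ref{lip-norm}. Once these are in place, the classical obstruction-theoretic manipulations transfer verbatim to the $\ell_\infty$ setting.
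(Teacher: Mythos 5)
Your proposal is correct and follows essentially the same route as the paper: part 1 is the classical obstruction cocycle made bounded via Proposition~\ref{lip-norm} together with the uniform bilipschitz control on attaching maps coming from the finitely many cell types, and part 2 runs the difference-cochain argument, using boundedness of the cobounding cochain and the finiteness of bounded subsets of the finitely generated group $\pi_n(K)$ to choose Lipschitz representatives with a uniform constant. You in fact supply more detail than the paper does (including the converse direction of part 2, which the paper leaves implicit).
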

\begin{proof}
1. In the case of Lipschitz map $f$ in view of Proposition~\ref{lip-norm} the obstruction cocycle $\Theta(f):C_{n+1}(X,A)\to \pi_n(K)$ is bounded.
So we take $\Theta_L(f)=\Theta(f)$.

2. In the  proof of Theorem~\ref{class}  an extension $f_{n+1}$ is obtained by
construction of a  map $g_n:X^{(n)}\cup A$ that agrees on $X^{(n-1)}\cup A$ with $f$
such that the difference cochain $d_{f,g}=d$ where  $\delta d=\Theta(f)$. Since for the difference cochain
$\delta d_{f,g}=\Theta(f)-\Theta(g)$, we obtain that $g_n$ extends to the $n+1$-skeleton.
When $d$ is bounded, the map $g_n$ can be constructed to be Lipschitz in view of finite choice of homotopy classes.
Then the extension $g_{n+1}$ of $g_n$ which exists by the classical obstruction theory can be taken to be Lipschitz.
\end{proof}

Like in the classical case Theorem~\ref{lip} implies the corresponding theorem about the primary
obstruction for constructing a Lipschitz homotopy between Lipschitz maps.

\begin{thm}\label{obstruction}
Let $f,g:X\to K$ two Lipschitz maps of a uniform complex $X$ to a simply connected
finite complex $K$. Suppose that there is a Lipschitz homotopy $H:X^{(n)}\times I\to K$ with
$$H|_{X^{(n)}\times\{0\}}=f|_{X^{(n)}\times\{0\}}\ \ \ \text{and}\ \ \ H|_{X^{(n)}\times\{1\}}=g|_{X^{(n)}\times\{1\}}.$$
Then there is a Lipschitz map $\bar H:X^{(n+1)}\times I\to K$ that coincides with $H$ on $X^{(n-1)}\times I$ and
with
$$\bar H|_{X^{(n+1)}\times\{0\}}=f|_{X^{(n+1)}\times\{0\}}\ \ \text{and}\ \ \ \bar H|_{X^{(n+1)}\times\{1\}}=g|_{X^{(n+1)}\times\{1\}}$$ if and only if the certain obstruction class $\Theta_{f,g}\in H_{(\infty)}^{n+1}(X;\pi_{n+1}(K))$ is zero.
\end{thm}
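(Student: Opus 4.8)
The plan is to deduce this from Theorem~\ref{lip} applied to the relative complex $(Y,B)$ with $Y=X\times I$ and $B=X\times\partial I$. Since $X$ is a uniform complex, $Y$ carries a product geodesic metric with only finitely many isomorphism types of cells, so $(Y,B)$ is a relative uniform cellular complex. Its relative cells are exactly the products $\sigma\times I$, where $\sigma$ runs over the cells of $X$, and $\dim(\sigma\times I)=\dim\sigma+1$; consequently
$$(Y,B)^{(k)}\cup B=\big(X^{(k-1)}\times I\big)\cup\big(X\times\partial I\big)$$
for every $k$. In particular a Lipschitz map on $(Y,B)^{(n+1)}\cup B$ is precisely a Lipschitz map on $X^{(n)}\times I$ together with Lipschitz maps on the two copies $X\times\{0\}$ and $X\times\{1\}$ that agree with it along $X^{(n)}\times\partial I$. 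The hypotheses therefore furnish such a map $F\colon (Y,B)^{(n+1)}\cup B\to K$, namely $F=H$ on $X^{(n)}\times I$, $F=f$ on $X\times\{0\}$, and $F=g$ on $X\times\{1\}$; it is Lipschitz, being assembled from Lipschitz pieces glued along subcomplexes of the geodesic complex $Y$.

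Next I would apply Theorem~\ref{lip}(2) to $(Y,B)$ and the map $F$ (here $K$ is simply connected and finite, as required). It produces a class $[\Theta_L(F)]\in H^{n+2}_{(\infty)}(Y,B;\pi_{n+1}(K))$ which vanishes if and only if the restriction of $F$ to $(Y,B)^{(n)}\cup B=(X^{(n-1)}\times I)\cup(X\times\partial I)$ --- that is, $H$ on $X^{(n-1)}\times I$ together with $f$ and $g$ --- extends to a Lipschitz map on $(Y,B)^{(n+2)}\cup B=(X^{(n+1)}\times I)\cup(X\times\partial I)$. Unwinding the identification of skeleta once more, such an extension is exactly a Lipschitz map $\bar H\colon X^{(n+1)}\times I\to K$ that coincides with $H$ on $X^{(n-1)}\times I$ and with $f$, $g$ on $X^{(n+1)}\times\{0\}$, $X^{(n+1)}\times\{1\}$ --- the map required in the statement.

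It remains to transport the obstruction from $(Y,B)$ to $X$. The bijection $\sigma\leftrightarrow\sigma\times I$ between the cells of $X$ and the relative cells of $(Y,B)$ shifts dimension by one, and since $\partial(\sigma\times I)=\partial\sigma\times I\pm\sigma\times\partial I$ with the last term lying in $C_*(B)$, it identifies the relative cellular chain complex $C_*(Y,B)$ with $C_{*-1}(X)$ up to the usual signs. On the cochain level this bijection matches cochains with values bounded over the relative cells of $(Y,B)$ with cochains with values bounded over the cells of $X$, so it restricts to an isomorphism of the bounded-value cochain complexes and induces $H^{n+2}_{(\infty)}(Y,B;\pi_{n+1}(K))\cong H^{n+1}_{(\infty)}(X;\pi_{n+1}(K))$. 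Define $\Theta_{f,g}$ to be the image of $[\Theta_L(F)]$ under this isomorphism; the equivalence established above then says precisely that $\bar H$ exists if and only if $\Theta_{f,g}=0$.

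The one genuinely delicate point, and the place where the primary obstruction nature of the statement is used, is the claim that $\Theta_{f,g}$ depends only on $f$, $g$ (and the homotopy class of $H$ on $X^{(n-1)}\times I$) rather than on the full map $F$; this is verified by the standard difference-cochain comparison, now carried out with bounded cochains and using the uniformity of $X$ (via Theorem~\ref{lip} and Corollary~\ref{uniform limit}) to keep every auxiliary map Lipschitz. For the stated equivalence, however, only the existence of a class with the displayed vanishing property is needed, and that is immediate from the three steps above.
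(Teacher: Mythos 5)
Your argument is correct and is exactly the derivation the paper intends: the paper gives no written proof, merely asserting that Theorem~\ref{lip} yields the homotopy-obstruction statement ``like in the classical case,'' and your application of Theorem~\ref{lip}(2) to the relative uniform complex $(X\times I, X\times\partial I)$ together with the dimension-shifting cell bijection $\sigma\leftrightarrow\sigma\times I$ identifying $H^{n+2}_{(\infty)}(X\times I,X\times\partial I;\pi_{n+1}(K))$ with $H^{n+1}_{(\infty)}(X;\pi_{n+1}(K))$ is precisely that classical argument, carried out with bounded cochains. The details you supply (identification of skeleta, boundedness preserved under the shift) are the right ones and fill in what the paper leaves implicit.
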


\section{Cohomology of the Higson compactification of $\H^n$ for the $C_0$ coarse structure}

We recall that the \v Cech cohomology of a space $X$ can be defined by means of homotopy classes of maps to the Eilenberg-MacLane complex,
$\check{H}^k(X)=[X,K(\Z,n)]$.
\begin{theorem}\label{main1} 
Let $\H^n$ be the n dimensional hyperbolic space and $h_0\H^n$, its $C_0$ Higson compactification.
Then $\check{H}^k(h_0\H^n)=0$ for all $k,n>1$.
\end{theorem}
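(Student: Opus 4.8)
The plan is to reduce the vanishing of $\check H^k(h_0\H^n)$ to the vanishing of the reduced $\ell_\infty$-cohomology of (a uniform complex quasi-isometric to) $\H^n$, and then to invoke Theorem~\ref{hypgr}. First I would represent a class in $\check H^k(h_0\H^n)=[h_0\H^n,K(\Z,k)]$ by a map $F:h_0\H^n\to K$, where $K$ is a finite-skeleton model of $K(\Z,k)$; restricting to $\H^n$ and using Corollary~\ref{uniform} (together with Theorem~\ref{unifTh}), a map $\H^n\to K$ extends over $h_0\H^n$ precisely when it is uniformly continuous, so classes of $h_0\H^n$ correspond exactly to \emph{uniform homotopy classes} of uniformly continuous maps $\H^n\to K$. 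By Corollary~\ref{uniform limit} every such map is a uniform limit of Lipschitz maps, and (since the target is a fixed finite complex and Lipschitz maps close enough in the sup metric are Lipschitz-homotopic) one may replace $f|_{\H^n}$ by a genuinely Lipschitz map $f:\H^n\to K$ without changing the extended homotopy class. Fix a uniform CW structure on $\H^n$ (e.g. a cocompact triangulation coming from a cocompact lattice action, or more honestly the Rips-type complex for a hyperbolic group $\Gamma$ acting on a space quasi-isometric to $\H^n$); a Lipschitz map on $\H^n$ restricts to a Lipschitz map on this complex.

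Next I would run the Lipschitz obstruction machinery of Section~4. Since $K=K(\Z,k)$ with $k>1$ is simply connected with $\pi_k(K)=\Z$ the only nonvanishing homotopy group, a Lipschitz map $f:X^{(k)}\to K$ on the $k$-skeleton of the uniform complex $X\simeq\H^n$ extends to all of $X$ iff a single bounded obstruction class in $H^{k+1}_{(\infty)}(X;\pi_k(K))=H^{k+1}_{(\infty)}(\Gamma)$ vanishes; but $k+1>1$, so by Theorem~\ref{hypgr} this group is $0$ and the extension exists. Conversely, two Lipschitz maps $X\to K$ that are (unrestrictedly) homotopic are Lipschitz-homotopic iff the primary obstruction in $H^{k+1}_{(\infty)}(X;\pi_{k+1}(K))$ vanishes — and $\pi_{k+1}(K)=0$ since $K=K(\Z,k)$, so there is no obstruction at all and the set of Lipschitz homotopy classes injects into (in fact, is in bijection with) something that obstruction theory forces to be a single point. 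Assembling: every uniformly continuous $f:\H^n\to K$ is uniformly homotopic to a Lipschitz one, which in turn Lipschitz-extends/contracts; hence $[h_0\H^n,K(\Z,k)]$ is trivial, i.e. $\check H^k(h_0\H^n)=0$ for $k>1$. (For $n\le 1$ the statement is vacuous or excluded; $k>1$ and $n>1$ are both in the hypothesis.)

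The main obstacle I anticipate is \emph{bookkeeping of basepoints and of the skeleton-by-skeleton matching in the Lipschitz category}: Theorems~\ref{lip} and~\ref{obstruction} are stated for extending or homotoping maps that already agree on lower skeleta, whereas I am starting from an arbitrary uniformly continuous map on all of $\H^n$. The careful argument must first Lipschitz-approximate $f$, then note that on the finite-type complex $X$ the $k$-skeleton carries all the relevant information (because $\pi_i(K)=0$ for $i\ne k$ makes the map on $X^{(k)}$ determine the homotopy class), and then feed $f|_{X^{(k)}}$ into Theorem~\ref{lip}(2) with $A=\emptyset$ to get a Lipschitz extension to $X^{(k+1)}$ and hence, since there are no higher obstructions, to all of $X$; finally one checks that this Lipschitz extension, composed with the contraction of the obstruction-free target, is uniformly close to $f$ and therefore uniformly homotopic to it, so that the extended map $F:h_0\H^n\to K$ is nullhomotopic. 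A secondary point requiring care is the identification $X\simeq \H^n$ as \emph{uniform} complexes with quasi-isometry respecting the $\ell_\infty$-cohomology — this uses that $\ell_\infty$-cohomology of a group is a quasi-isometry invariant (as recorded via Proposition~\ref{ger} and the discussion preceding Theorem~\ref{hypgr}), so that Theorem~\ref{hypgr} genuinely applies to the complex at hand.
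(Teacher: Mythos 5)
Your proposal follows essentially the same route as the paper: restrict to $\H^n$, approximate by a Lipschitz map via Corollaries~\ref{uniform} and~\ref{uniform limit}, transfer to a uniform lattice triangulation of $\H^n$, kill the obstructions to a Lipschitz nullhomotopy using Theorem~\ref{hypgr}, and extend the resulting homotopy over $h_0\H^n$ by uniform continuity (the paper does this last step by passing to the adjoint map into the compact space of $\lambda$-Lipschitz paths and applying Corollary~\ref{uniform}, which your sketch glosses but matches). One correction: the primary obstruction to Lipschitz-nullhomotoping a map into a finite-skeleton model of $K(\Z,k)$ lies in $H^{k}_{(\infty)}(X;\pi_k(K))=H^{k}_{(\infty)}(X;\Z)$, not in $H^{k+1}_{(\infty)}(X;\pi_{k+1}(K))$; it does not vanish for trivial coefficient reasons but by Theorem~\ref{hypgr}, and this (together with simple connectivity of the target) is precisely where the hypothesis $k>1$ is used, so the phrase ``there is no obstruction at all'' obscures the one place the theorem actually does work.
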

\begin{proof}
Let $[\bar f]\inn \check{H}^k(h_0\H^n)$ be defined by a map
$\bar{f}:h_0\H^n\to K(\Z,k)$. It is known that the complex  $K(\Z,k)$ can be chosen in such a way that
all its skeletons are  finite complexes. Since  $h_0\H^n$ is compact, the image of $f$ lies in some skeleton $K(\Z,k)^{(i)}=K$ for $i>n$. 
Consider the restriction
$f:\H^n\to K$. Let $\epsilon>0$ be such that any two
$\epsilon$-maps to $K$ are homotopic.
Since $f\inn C_{h_0}$ by Corollary~\ref{uniform} and Corollary~\ref{uniform limit}
there exists a Lipschitz map $g:\H^n\to K$ $\epsilon$-close to $f$. Since $g$ is
Lipschitz, it extends to $\bar g:h_0\H^n\to K$. 

We consider the following equivariant triangulation on $\H^n$: Take a uniform lattice on $\H^n$, consider a triangulation on the orbit manifold and take the lift. We refer to  textbooks on lattices  (say, ~\cite{Mo}) for the existence of such lattices for all $n$.
Let $X=\H^n$ denote the corresponding simplicial complex taken with the uniform metric. 
We note that $X$ is quasi-isometric to $\H^n$. By Theorem~\ref{hypgr}, $H^i_{(\infty)}(X;A)=0$ for $i>1$ for any finitely generated
normed abelian group $A$.
We construct by induction  a sequence of Lipschitz homotopies
$H_i:X^{(i)}\times I\to K$ such that:
\[ H_i|_{X^{(i)}\times \{0\}}=g,\,\,H_i|_{X^{(i)}\times \{1\}}=c \,\,(\text{a constant function}).\] 

A Lipschitz homotopy on the 1-skeleton  $X^{(1)}$ can be easily constructed since $K$ is simply connected.
Assume that $H_i$ is already constructed, $i\ge 1$. Since   $H^{i+1}_{(\infty)}(X;\pi_{i+1}(K))=0$,
Theorem~\ref{obstruction} implies that there is a  required homotopy $H_{i+1}:X^{(i+1)}\times I\to K$.

Note that $X^{(n)}=X$. Let $\lambda$ be a Lipschitz constant for the map $H=H_n$.
We consider the associated map \[h:X\to \lambda\text{-}Map(I,K)\] defined by 
$H$. 
By the Ascoli Arzela Theorem, the space $\lambda\text{-}Map(I,K)$ is compact. 
Note that the map $h$ is Lipschitz. By Corollary~\ref{uniform} it admits a continuous extension
$\bar{h}: h_0\H^n\to \lambda\text{-}Map(I,K)$.  Clearly, the associate map $\bar{H}:h_0\H^n\times I\to K$
defines a
homotopy between $\bar{g}$ and a constant map.
Thus, $[\bar{g}]=[0]$. Since $f,g$ are $\epsilon$-close, we obtain
$[\bar{f}]=[\bar{g}]=[0]$. 
\end{proof}

\section{Cohomology of the Higson compactification  of $\H^n$ for the bounded coarse structure}

We recall that the standard Higson compactification of a metric space is defined by means of slowly 
oscillating functions~\cite{DKU}. A map to a locally compact metric space to a compact metric space $f:X\to K$ is called
{\em slowly oscillating} if for every $r>0$, $\lim_{x\to\infty}diam(f(B_r(x)))=0$ where $B_r(x)$  denotes the $r$-ball centered at $x$.

Let $Lip(f:X\to Y)=\sup\{\frac{d_Y(f(x),f(x'))}{d_X(x,x')}\}$.
We call a function $f$ to be {\em $r$-locally $\lambda$-Lipschitz} if  $$d_Y(f(x),f(x'))\le\lambda d_X(x,x')$$ for all $x,x'\in X$ with $d_X(x,x')\le r$. 
Thus, it is $\lambda$-Lipschitz on every $r/2$-ball,
$Lip(f|_{B_{\frac{r}{2}} (x)})\le\lambda$. 

\begin{prop}\label{locally}
(1) Suppose that  a bounded function $f:X\to\R$ on a complete Reimannian manifold $X$ satisfies the condition
$\lim_{x\to\infty}Lip(f|_{B_r(x)})=0$ for some $r>0$.Then $f$ is slowly oscillating.

(2) Every  slowly oscillating bounded function $g:X\to\R$ can be uniformly approximated by functions $f$
with $\lim_{x\to\infty}Lip(f|_{B_1(x)})=0$.
\end{prop}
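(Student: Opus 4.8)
The plan is to prove the two parts separately, both being essentially local smoothing arguments.

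\textbf{Part (1).} The hypothesis $\lim_{x\to\infty}Lip(f|_{B_r(x)})=0$ means: for every $\delta>0$ there is a compact set $C\subset X$ such that for $x\notin C$, the restriction $f|_{B_r(x)}$ is $\delta$-Lipschitz. I would fix $\rho>0$ and show $\lim_{x\to\infty}\diam f(B_\rho(x))=0$. The key geometric fact is that in a complete Riemannian manifold any two points $y,y'\in B_\rho(x)$ are joined by a geodesic of length $d(y,y')<2\rho$, and this geodesic stays within $B_{3\rho}(x)$ (crude estimate by the triangle inequality). So if $\rho\le r/3$, I cover the geodesic by finitely many (at most $\lceil 2\rho/r\rceil+1$, hence a bounded number $N$ depending only on $\rho,r$) overlapping pieces each of diameter $<r$, centered at points near $x$. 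Provided $x$ is far enough out that all these centers lie outside $C$, summing the $\delta$-Lipschitz estimates along the chain gives $|f(y)-f(y')|\le N\delta\cdot(2\rho)$ — wait, more carefully: $|f(y)-f(y')|$ is bounded by the sum of $f$-increments along consecutive subdivision points, each of which is $\le \delta$ times the corresponding subarc length, so the total is $\le \delta\cdot 2\rho$. Taking $\delta\to 0$ yields $\diam f(B_\rho(x))\to 0$. For general $\rho>r/3$ one reduces to the previous case by chaining balls of radius $r/3$, again with a number of steps bounded in terms of $\rho/r$, so slow oscillation at scale $\rho$ follows from slow oscillation at scale $r/3$. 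Hence $f$ is slowly oscillating.

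\textbf{Part (2).} Given a slowly oscillating bounded $g:X\to\R$, I want $f$ with $\lim_{x\to\infty}Lip(f|_{B_1(x)})=0$ and $\|f-g\|_\infty$ small. The natural device is a mollification/averaging at shrinking scales guided by how fast $g$ oscillates. Concretely, choose an exhaustion $C_1\subset C_2\subset\cdots$ of $X$ by compacts with $\diam g(B_1(x))<1/k$ for $x\notin C_k$ (possible by slow oscillation, after passing to a subsequence of the exhaustion). On the annular regions $A_k=C_{k+1}\setminus C_k$ one averages $g$ over balls of radius $\eps_k\to 0$: set $f(x)=\fint_{B_{\eps_k}(x)}g\,d\vol$ for $x\in A_k$, patched together with a Lipschitz partition of unity subordinate to a slightly enlarged cover $\{C_{k+1}^\circ\setminus C_{k-1}\}$. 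On $B_1(x)$ with $x$ in the $k$-th region, $g$ varies by $<c/k$, so the averaged function also varies by $<c/k$ and moreover is Lipschitz on $B_1(x)$ with constant $O((1/k)/\eps_k')$ where $\eps_k'$ is the smallest averaging radius used near $x$; choosing $\eps_k$ to decay slowly enough that $(1/k)/\eps_k\to 0$ (e.g. $\eps_k=1/\sqrt k$) forces $Lip(f|_{B_1(x)})\to 0$. The uniform closeness $|f(x)-g(x)|\le\diam g(B_{\eps_k}(x))\le\diam g(B_1(x))<c/k$ off a compact set handles the bulk, and one absorbs the error on the initial compact set by a separate constant or simply by noting the approximation need only be controlled at infinity, rescaling the whole construction by a prefactor to make $\|f-g\|_\infty$ as small as desired.

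\textbf{Main obstacle.} The delicate point is Part (2): making the partition-of-unity patching between averaging regions not destroy the Lipschitz-at-scale-$1$ decay, since the partition functions themselves have some Lipschitz constant on overlaps. This is handled by arranging the overlaps $A_k\cap A_{k+1}$ to be "wide" (comparable to the distance across $C_{k+1}\setminus C_k$, which we are free to choose large by spacing the exhaustion out) so that the gradient of the partition of unity there is as small as we like, and by noting that on the overlap the two averaged functions $f_k,f_{k+1}$ differ by at most $O(1/k)$, so the patching contributes a Lipschitz term $O((1/k)\cdot(\text{small gradient}))$, still $\to 0$. Part (1) is routine once the chaining-of-balls observation is in place; the only thing to be careful about is that balls of radius $r$ at nearby centers genuinely cover the short geodesic and that all these centers eventually leave the compact set $C$, which is immediate since they all lie within distance $2\rho$ of $x\to\infty$.
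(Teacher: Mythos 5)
Your part (1) is a correct fleshing-out of what the paper simply declares ``obvious'': join $y,y'\in B_\rho(x)$ by a minimizing geodesic (Hopf--Rinow), subdivide it into steps of length $<r$, apply the local $\delta$-Lipschitz bound on each step, and sum to get $|f(y)-f(y')|\le \delta\, d(y,y')\le 2\rho\delta$. The only care needed --- that all subdivision points lie within $3\rho$ of $x$ and hence eventually leave the exceptional compact set --- you address. Fine.

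For part (2) you take a genuinely different route. The paper does no construction at all: it quotes Roe's result that the algebra defining the Higson compactification is the norm closure of the bounded smooth functions whose gradient vanishes at infinity, and observes that any such function automatically satisfies $\lim_{x\to\infty}Lip(f|_{B_1(x)})=0$ (integrate the gradient bound along geodesics). Your direct mollification-with-shrinking-scales argument is more self-contained and makes the mechanism visible, and your treatment of the partition-of-unity cross term $(f_k-f_{k+1})\nabla\phi=O(1/k)\cdot O(1)$ is correct (the ``wide overlaps'' precaution is not even needed). However, there is one real gap: the central quantitative claim that the ball average $f(x)=\fint_{B_\eps(x)}g$ has $Lip(f|_{B_1(x)})=O(\omega/\eps)$, where $\omega$ bounds the oscillation of $g$ at scale $\eps+1$, is asserted without proof. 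It rests on a volume comparison of the form $\vol\bigl(B_\eps(x)\,\triangle\, B_\eps(x')\bigr)\le C\,\frac{d(x,x')}{\eps}\,\vol B_\eps(x)$ uniformly in $x$, which holds on $\H^n$ and on any manifold of bounded geometry, but can fail on an arbitrary complete Riemannian manifold (collapsing injectivity radius, unbounded curvature), which is the generality in which the proposition is stated. You should either add a bounded-geometry hypothesis (harmless for the paper, which only uses $X=\H^n$), or replace the ball average by a convolution against a kernel written in terms of the distance function (as in Roe's density argument), whose local Lipschitz constant is controlled without volume regularity. A second, minor, point: ``the approximation need only be controlled at infinity'' is not right as stated --- uniform approximation is a global $\sup$-norm condition --- but the fix is immediate: start the annular construction at an index $k_0$ with $1/k_0$ below the target error and, on the initial compact piece, mollify at a single scale small enough (using uniform continuity of $g$ on compacta) to stay within that error there as well.
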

\begin{proof}
(1) is obvious.

(2) It was shown in~\cite{Ro1} that the algebra of functions defining the Higson compactification
is the completion of the space of bounded functions with the gradient tending to zero at infinity.
Clearly every such function $f$ satisfies the condition $\lim_{x\to\infty}Lip(f|_{B_1(x)})=0$.
\end{proof}

We recall that a smooth map $p:P\to M$ between Riemannian manifolds is called {\em Riemannian submersion} if
the induced map of the tangent bundles $Tp:TP\to TM$ preserves the length of horizontal vectors, i.e., vectors orthogonal to
fibers $p^{-1}(y)$. We call a geodesic segment $\gamma:[a,b]\to P$ {\em horizontal at} $t\in[a,b]$ 
if the tangent vector to $\gamma$ at $t$ is horizontal.
\begin{lem}[Hermann~\cite{He}]
Let $p:P\to M$ be a Riemannian submersion. If $\gamma$ is a geodesic in $P$ which is horizontal at one point, then it is always horizontal and
$p\circ\gamma$ is a geodesic in $M$.
\end{lem}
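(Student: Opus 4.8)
The plan is to reduce the statement to the standard first‑variation / second‑variation analysis of geodesics, using the O'Neill fundamental tensors of the Riemannian submersion $p:P\to M$. Write $T_xP = \mathcal{H}_x \oplus \mathcal{V}_x$ for the horizontal–vertical splitting, where $\mathcal{V}_x = \ker(T_xp)$ and $\mathcal{H}_x = \mathcal{V}_x^{\perp}$. The key structural fact I would invoke is that $p$ restricted to a horizontal geodesic is, after this splitting, an isometry onto its image: $Tp\colon \mathcal{H}_x\to T_{p(x)}M$ is a linear isometry by the definition of Riemannian submersion recalled in the excerpt.

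First I would show that a geodesic $\gamma$ in $P$ that is horizontal at some $t_0$ stays horizontal. Let $V(t)$ denote the vertical component of $\dot\gamma(t)$ along $\gamma$. One computes $\frac{d}{dt}\langle \dot\gamma,\dot\gamma\rangle = 0$ since $\gamma$ is a geodesic, so $|\dot\gamma|$ is constant; the real work is to control the vertical part. The clean way is: the function $t\mapsto |V(t)|^2$ satisfies a first‑order linear ODE whose coefficients are built from the O'Neill $A$‑tensor and the second fundamental form of the fibers; since $V(t_0)=0$, uniqueness of solutions to this ODE forces $V\equiv 0$. Concretely, for a geodesic $\gamma$ one has the identity $\frac{d}{dt}\langle\dot\gamma,U\rangle = \langle\dot\gamma,\nabla_{\dot\gamma}U\rangle$ for any vector field $U$ along $\gamma$; choosing $U$ to be a vertical field extending $V(t)$ and using that the horizontal part of $\dot\gamma$ is basic, the mixed term that appears is skew‑symmetric and drops out, leaving a homogeneous linear equation in $V$. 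Hence $V(t_0)=0 \Rightarrow V\equiv 0$, so $\gamma$ is horizontal everywhere.

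Next, granting that $\gamma$ is everywhere horizontal, I would show $p\circ\gamma$ is a geodesic in $M$. Since $Tp$ is an isometry on horizontal vectors, $|(p\circ\gamma)'(t)| = |\dot\gamma(t)|$ is constant, so $p\circ\gamma$ is parametrized proportionally to arclength; it remains to check it is locally length‑minimizing, or equivalently that $\nabla^M_{(p\circ\gamma)'}(p\circ\gamma)' = 0$. For this use the standard O'Neill formula: if $X,Y$ are horizontal lifts of vector fields on $M$, then the horizontal part of $\nabla^P_X Y$ is the horizontal lift of $\nabla^M_{p_*X}p_*Y$. Apply this with $X=Y$ the extension of $\dot\gamma$ by basic horizontal fields: the horizontal part of $\nabla^P_{\dot\gamma}\dot\gamma = 0$ is then the horizontal lift of $\nabla^M_{(p\circ\gamma)'}(p\circ\gamma)'$, which therefore vanishes. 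So $p\circ\gamma$ is a geodesic.

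The main obstacle is the first step — propagation of horizontality — because it requires the correct bookkeeping with the O'Neill tensors and care about the distinction between a horizontal field along $\gamma$ and a globally basic horizontal field; the naive computation produces cross terms, and one must see that the relevant cross term is antisymmetric (it is the $A$‑tensor evaluated on a vector and itself, hence zero) so that the resulting ODE for $|V(t)|^2$ is genuinely homogeneous. Once that vanishing is isolated, both conclusions follow formally from the isometry property of $Tp$ on the horizontal distribution and the O'Neill curvature/connection identities. I would cite Hermann's original paper and O'Neill for the tensor formalism rather than reprove it here.
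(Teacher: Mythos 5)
The paper does not prove this lemma at all --- it is quoted verbatim from Hermann's 1960 paper \cite{He} and used as a black box --- so there is no in-paper argument to compare against; your proposal must be judged on its own. On those terms it is essentially correct and is the standard O'Neill-tensor proof. Your step 1 computation does close: writing $\dot\gamma=H+V$ and using $\nabla_{\dot\gamma}\dot\gamma=0$, one gets $\tfrac{d}{dt}|V|^2=-2\langle \mathcal{V}\nabla_{\dot\gamma}H,V\rangle$, the $A$-term $A_HH$ vanishes by O'Neill's antisymmetry, and the surviving term is $-2\langle T_VH,V\rangle$ with $T$ the fiber second fundamental form. One quibble: this is not literally a homogeneous \emph{linear} ODE for $|V|^2$; it is the estimate $\bigl|\tfrac{d}{dt}|V|^2\bigr|\le C|V|^2$ (with $C$ controlled by $\|T\|$ and the constant speed $|\dot\gamma|$ on compact pieces of the curve), closed by Gronwall, or equivalently ODE uniqueness applied to the equation $\nabla_{\dot\gamma}V=A_HV+T_VV-T_VH$, whose right-hand side is smooth in $V$ and vanishes at $V=0$. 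Your step 2 via the identity $\mathcal{H}\nabla^P_XY=\widetilde{\nabla^M_{p_*X}p_*Y}$ for basic fields is fine (the local basic extension of $\dot\gamma$ exists because, once horizontal, $\dot\gamma$ projects to a nonvanishing velocity field along an embedded arc). It is worth noting that the tensor bookkeeping you identify as the main obstacle can be avoided entirely by the more elementary classical route: horizontal lifts of curves preserve length while arbitrary lifts can only be longer, so the horizontal lift of a short geodesic of $M$ is locally minimizing with constant speed, hence a geodesic of $P$; by uniqueness of geodesics with prescribed initial velocity, every geodesic of $P$ with horizontal initial velocity is such a lift, which yields both conclusions at once.
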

\begin{cor}\label{her}
Every geodesic segment $\gamma:[a,b]\to M$  for every $x\in p^{-1}(a)$ admits a unique horizontal lift $\bar\gamma:[a,b]\to P$ starting at $x$.
\end{cor}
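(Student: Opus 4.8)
The plan is to derive this from Hermann's Lemma together with the uniqueness of geodesics. First I would use the defining property of a Riemannian submersion: at $x\in p^{-1}(a)$ the differential $T_xp$ restricts to a linear isometry of the horizontal subspace $H_x\subset T_xP$ onto $T_{\gamma(a)}M$, so there is a unique horizontal vector $v\in H_x$ with $T_xp(v)=\gamma'(a)$. Let $\bar\gamma$ be the geodesic of $P$ with $\bar\gamma(a)=x$ and $\bar\gamma'(a)=v$, defined on a maximal subinterval $[a,c)\subseteq[a,b]$ by the theory of ODEs. Since $\bar\gamma$ is a geodesic that is horizontal at $t=a$, Hermann's Lemma guarantees that it is horizontal at every $t$ and that $p\circ\bar\gamma$ is a geodesic of $M$.

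Next I would identify $p\circ\bar\gamma$ with $\gamma$: both are geodesics of $M$, and $(p\circ\bar\gamma)(a)=\gamma(a)$, $(p\circ\bar\gamma)'(a)=T_xp(v)=\gamma'(a)$, so they coincide on $[a,c)$ by uniqueness of geodesics with prescribed initial data. Thus $\bar\gamma$ is a horizontal lift of $\gamma|_{[a,c)}$ starting at $x$. To push the domain up to $b$, observe that $T_xp$ being an isometry on horizontal vectors gives $|\bar\gamma'(t)|=|\gamma'(t)|$, so $\bar\gamma$ is a constant-speed curve of finite length; hence it is uniformly Cauchy as $t\to c$ and converges in $P$ (this is the one place completeness of $P$ enters, and it holds in all the situations we apply this to, e.g. when $P$ is compact). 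Its velocity converges as well, to a horizontal vector lying over $\gamma'(c)$, so $\bar\gamma$ extends as a geodesic beyond $c$; maximality then forces $c=b$, and the same limiting argument at the endpoint produces the horizontal lift on all of $[a,b]$.

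For uniqueness I would note that any horizontal lift is governed by a first-order equation. If $\sigma:[a,b]\to P$ satisfies $p\circ\sigma=\gamma$ with every $\sigma'(t)$ horizontal, then differentiating $p\circ\sigma=\gamma$ shows $T_{\sigma(t)}p(\sigma'(t))=\gamma'(t)$, and since $T_{\sigma(t)}p$ is an isomorphism on horizontal vectors, $\sigma'(t)$ is the unique horizontal vector at $\sigma(t)$ over $\gamma'(t)$. Hence $\sigma$ is the integral curve through $x$ of the smooth time-dependent vector field $q\mapsto$ (horizontal lift of $\gamma'(t)$ at $q$), and is therefore determined by its initial value; so $\sigma=\bar\gamma$. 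In particular the unique horizontal lift is automatically a geodesic.

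The main obstacle is the global step rather than the local one: Hermann's Lemma delivers the horizontal geodesic immediately near $t=a$, but one must ensure it does not escape from $P$ before time $b$. The finite-length/Cauchy estimate together with completeness of $P$ is exactly what rules this out; without a hypothesis of that kind on $P$ the statement would fail.
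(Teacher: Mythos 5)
Your proof is correct and follows the route the paper intends: the paper states this as an immediate consequence of Hermann's lemma and gives no further argument, and your write-up supplies exactly the standard details (horizontal initial velocity, Hermann to propagate horizontality, uniqueness of geodesics to identify the projection, and the integral-curve argument for uniqueness of the lift). Your observation that completeness of $P$ is needed to reach $t=b$ is a genuine (implicit) hypothesis, but it is harmless here since the corollary is only applied to closed manifolds.
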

We call a homotopy $H:Z\times[0,1]\to N$ in a Riemannian manifold $N$ {\em geodesic} if for every $z\in Z$, the restriction $H|_{\{z\}\times[0,1]}$
is a uniformly parametrized geodesic segment, i.e., each point $z$ traverses a geodesic segment (perhaps degenerate) with a uniform speed. Corollary~\ref{her} implies the following:\\
{\bf Homotoy Lifting Property.} {\em Let $p:P\to M$ be a Riemannian submersion and let $H:Z\times[0,1]\to M$
be a smooth geodesic homotopy. Then a smooth partial lift $h:Z\times\{0\}\to P$ admits a unique horizontal geodesic 
lift $\bar H:Z\times[0,1]\to P$ that extends $h$.}

For a Reimannian manifold $N$ we denote by $Exp:TN\to N\times N$ the exponential map: $Exp(x,v)=x\times exp_x(v)$, $v\in TN_x$.
\begin{prop}\label{retraction}
Let $p:P\to M$ be a locally trivial fibration between smooth closed manifolds.
Then there is a neighborhood $W$ of the graph $\Gamma_p\subset M\times P$ of $p$ and a smooth map $r:W\to \Gamma_p$ which is a fiberwise retraction, i.e.,
$r(x,y)\in p^{-1}(x)$ for all $(x,y)\in W$.
\end{prop}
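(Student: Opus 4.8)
The plan is to build the retraction $r$ from the fiberwise structure of $p$, using the exponential map along the fibers as the essential tool. First I would fix an auxiliary Riemannian metric on the total space $P$; since $P$ is closed, the fibers $p^{-1}(x)$ are compact submanifolds, and the normal exponential of each fiber gives a diffeomorphism from a uniform-radius neighborhood of the zero section of the normal bundle $\nu(p^{-1}(x))\subset TP$ onto a tubular neighborhood of $p^{-1}(x)$ in $P$. The uniformity over $x$ follows from compactness of $M$: there is a single $\delta>0$ so that the $\delta$-normal neighborhood $U_x$ of each fiber embeds via $\exp^P$. This yields, for each $x$, a smooth retraction $\rho_x\colon U_x\to p^{-1}(x)$ sending a point $q$ near $p^{-1}(x)$ to the foot of the unique nearby normal geodesic.

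Next I would assemble these fiberwise retractions into a single map on a neighborhood of the graph $\Gamma_p=\{(x,y): y\in p^{-1}(x)\}\subset M\times P$. Take $W=\{(x,q)\in M\times P : d_P(q,p^{-1}(x))<\delta\}$; this is an open neighborhood of $\Gamma_p$ because $(x,y)\in\Gamma_p$ means $y\in p^{-1}(x)$ and so $d_P(y,p^{-1}(x))=0$. On $W$ define $r(x,q)=\bigl(x,\rho_x(q)\bigr)$. By construction $\rho_x(q)\in p^{-1}(x)$, so $r(x,q)\in\{x\}\times p^{-1}(x)\subset\Gamma_p$, giving the fiberwise retraction property. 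Moreover on $\Gamma_p$ we have $\rho_x(y)=y$ for $y\in p^{-1}(x)$, so $r$ restricts to the identity on $\Gamma_p$, as a retraction should.

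It remains to verify that $r$ is smooth, and this is the step I expect to be the main obstacle, since the definition of $\rho_x$ a priori depends on solving a variational (nearest-point) problem whose smooth dependence on both $x$ and $q$ is not automatic. The clean way around it is to describe $r$ implicitly rather than by nearest point: consider the smooth map $E\colon M\times(\text{normal bundle of the fibration})\to M\times P$ built from $\exp^P$ along fibers, more precisely the total normal bundle $\nu$ of the submanifold $\bigsqcup_x(\{x\}\times p^{-1}(x))=\Gamma_p\subset M\times P$, equipped with the metric pulled back from $P$ on the second factor. The normal exponential of $\Gamma_p$ in $M\times P$ is a diffeomorphism from a $\delta$-disk bundle of $\nu$ onto $W$ (shrinking $\delta$ if necessary, again using compactness of $M\times P$), and $r$ is then the composition of its inverse with the bundle projection $\nu\to\Gamma_p$ followed by the zero section, all of which are smooth. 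Finally I would remark that replacing the normal exponential of $p^{-1}(x)$ in $P$ by the horizontal/vertical decomposition furnished by Hermann's lemma (Corollary~\ref{her}) lets one arrange, if desired, that $r$ be compatible with the horizontal geodesics, which is the form in which the Homotopy Lifting Property will be applied downstream; but for the bare statement of the proposition the normal-exponential construction above suffices.
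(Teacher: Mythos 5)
Your construction is correct in substance but follows a genuinely different route from the paper. The paper chooses Riemannian metrics making $p$ a Riemannian submersion (citing \cite{KMS}), takes the fiberwise geodesic contraction of an $\epsilon$-neighborhood $U$ of the diagonal in $M\times M$, pulls it back to $W=(1\times p)^{-1}(U)$, and defines $r(x,y)$ as the endpoint of the \emph{horizontal lift}, starting at $y$, of the geodesic from $p(y)$ to $x$; smoothness comes for free from Hermann's lemma and the smooth dependence of horizontal lifts on initial data. You instead fix an arbitrary metric on $P$ and retract onto each fiber $p^{-1}(x)$ by the (vertical) normal exponential, i.e.\ a tubular-neighborhood argument applied uniformly over $x$. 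Your approach is more elementary (no submersion metric, no Hermann), and it produces essentially the same map -- the paper itself remarks that its $r$ sends $(x,y)$ to the point of $p^{-1}(x)$ nearest to $y$. The paper's choice buys compatibility with horizontal geodesics, but that is not needed for this proposition, and your $r$ is just as usable downstream (only Lipschitzness and the fiberwise property are used in Proposition~\ref{bundle}).

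One point needs care in your smoothness argument: the phrase ``the normal exponential of $\Gamma_p$ in $M\times P$'' must not be taken literally with the product metric. The genuine nearest-point projection onto $\Gamma_p$ in $M\times P$ moves the $M$-coordinate as well, so it would land in $\Gamma_p$ but fail the condition $r(x,q)\in\{x\}\times p^{-1}(x)$. What you actually want (and what your parenthetical ``built from $\exp^P$ along fibers'' suggests) is the map $E(x,y,v)=(x,\exp^P_y(v))$ defined on the bundle over $\Gamma_p$ whose fiber at $(x,y)$ is the orthogonal complement of $\ker d_yp$ in $T_yP$; this is a smooth rank-$(\dim M)$ bundle since $\ker dp$ is a smooth sub-bundle, $dE$ is an isomorphism along the zero section, and compactness gives a uniform $\delta$ on which $E$ is a diffeomorphism onto a neighborhood of $\Gamma_p$. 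With that reading, $r=(\text{zero section})\circ(\text{bundle projection})\circ E^{-1}$ is smooth, fixes the first coordinate, and agrees with your nearest-point description, so the proof goes through.
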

\begin{proof}
We note that $P$ and $M$ can be given Riemannian metrics such that $p$ is a Riemannian submersion~\cite{KMS}.
Let  $\epsilon>0$ be smaller than the injectivity radius of $M$. Let $U\subset M\times M$ be the image of the $\epsilon$-ball sub-bundle $TM_{\epsilon}$ of the tangent bundle $TM$ under the exponential map $Exp:TM\to M\times M$.
Thus, $U=\cup_{x\in M}\{x\}\times B_{\epsilon}(x)$ is the union of the $\epsilon$-balls.
Let $R_t:U\to\Delta M$ be the fiberwise geodesic contraction. It defines a fiberwise geodesic homotopy $G:U\times[0,1]\to M\times M$.
Let $W=(1\times p)^{-1}(U)\subset M\times P$. 
Consider a geodesic homotopy $H=(1\times p)\circ G:W\times [0,1]\to M\times M$. By the Homotopy Lifting property, it has a horizonatal lift
to a geodesic homotopy $\bar H:W\times[0,1]\to M\times P$. Then $r=H(\ ,1)$ is a smooth fiberwise retraction of $W$ onto $\Gamma_p$.

Note that the retraction $r$ sends every point $(x,y)\in W$ to $(x,z)$ where $z$ is the nearest to $y$ point in $p^{-1}(x)$.
\end{proof}

The following Proposition is an extension of a construction from the proof of Theorem 5.1 in~\cite{DF}.
\begin{prop}\label{bundle}
Let $h:E\to B$ be a locally trivial  bundle between  closed Riemannian manifolds. Then  for every
map $f:\H^n\to B$ with $\lim_{x\to\infty}Lip(f|_{B_1(x)})=0$ there is a slowly oscillating lift $g:\H^n\to E$.
\end{prop}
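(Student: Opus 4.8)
The plan is to exploit the negative curvature of $\H^n$ together with the Homotopy Lifting Property established above for Riemannian submersions, following (and extending) the construction from Theorem 5.1 in~\cite{DF}. First I would fix a basepoint $x_0\in\H^n$ and, for each $x\in\H^n$, use the fact that $\H^n$ is CAT($0$) (in fact CAT($-1$)): there is a unique geodesic segment $\gamma_x:[0,\|x\|]\to\H^n$ from $x_0$ to $x$, and these fit together into a ``geodesic contraction'' homotopy $C:\H^n\times[0,1]\to\H^n$, $C(x,t)=\gamma_x(t\|x\|)$, which is smooth away from the basepoint and is a geodesic homotopy in the sense defined above (each point traverses a geodesic with uniform speed). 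Composing with $f$ gives a homotopy $F=f\circ C:\H^n\times[0,1]\to B$. The key point is to \emph{reparametrize} $F$ so that it becomes a smooth geodesic homotopy in $B$: I would replace the straight composition by the homotopy that moves $f(x)$ back to $f(x_0)$ along the $B$-geodesic determined by $\exp^{-1}_{f(x_0)}f(x)$ whenever $f(x)$ is within the injectivity radius of $f(x_0)$, and more generally subdivide $[0,\|x\|]$ into finitely many steps on each of which $f\circ\gamma_x$ has small diameter (possible because $f$ is $1$-locally small-Lipschitz, so $f\circ\gamma_x$ is a path of controlled length whose image can be broken into arbitrarily short geodesic-approximable pieces), then lift step by step.

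Next I would apply the Homotopy Lifting Property: picking any point $e_0\in h^{-1}(f(x_0))$ as the partial lift over $\H^n\times\{1\}$ (the collapsed end), the geodesic homotopy $F$ lifts to a unique horizontal geodesic homotopy $\bar F:\H^n\times[0,1]\to E$ with $\bar F(\cdot,1)\equiv e_0$ and $h\circ\bar F=F$. Setting $g=\bar F(\cdot,0):\H^n\to E$ gives a lift of $f$, since $h\circ g=F(\cdot,0)=f$. The role of Corollary~\ref{her} is precisely that horizontal lifts of geodesics exist and are unique and length-preserving on horizontal vectors, so $g$ inherits good metric control from $f$: the length of $\bar F|_{\{x\}\times[0,1]}$ equals the length of $F|_{\{x\}\times[0,1]}$, which is at most the length of $f\circ\gamma_x$, which in turn is $O(\|x\|)\cdot\sup_{B_1(\gamma_x)}Lip(f)$.

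Then I would verify that $g$ is slowly oscillating. Given $r>0$ and two points $x,x'\in\H^n$ with $d(x,x')\le r$ and $\|x\|,\|x'\|$ large, the geodesics $\gamma_x,\gamma_{x'}$ from $x_0$ diverge: by the thin-triangles / exponential-divergence property of $\H^n$ (negative curvature), the segments stay uniformly close only on an initial portion of length $O(\log(1/\text{something}))$ — more precisely $\gamma_x(s)$ and $\gamma_{x'}(s)$ are within a bounded Hausdorff neighborhood of each other, and for $s$ away from the endpoints both lie far from $x_0$. Along the portion near $x_0$ the homotopies $\bar F$ for $x$ and for $x'$ agree (they only depend on the path, which coincides initially up to exponentially small error), and along the outer portion $f$ is nearly constant on $r$-balls so the lifted geodesic homotopies $\bar F|_{\{x\}\times[0,1]}$ and $\bar F|_{\{x'\}\times[0,1]}$ stay uniformly close by the uniqueness and continuous dependence of horizontal lifts on the base geodesic. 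Combining, $d_E(g(x),g(x'))=d_E(\bar F(x,0),\bar F(x',0))\to 0$ as $x\to\infty$, i.e. $\lim_{x\to\infty}\operatorname{diam}(g(B_r(x)))=0$.

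I expect the main obstacle to be the reparametrization/subdivision step that turns $f\circ C$ into an honest smooth geodesic homotopy in $B$ to which the Homotopy Lifting Property applies verbatim: one must choose the number of subdivision steps as a function of $\|x\|$ (growing, since $f\circ\gamma_x$ has length growing with $\|x\|$) while keeping the resulting piecewise-geodesic homotopy smooth in $x$ and keeping the accumulated error in the lift under control near the ``seams'' where consecutive geodesic pieces meet. The quantitative estimate showing $g$ is slowly oscillating — tracking how the divergence of $\gamma_x,\gamma_{x'}$ in $\H^n$ interacts with the Lipschitz-decay of $f$ and the contraction properties of horizontal lifts — is the technical heart, and is where the negative curvature of $\H^n$ (as opposed to $\R^n$) is essential.
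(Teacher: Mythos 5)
Your overall strategy (build the lift radially from a basepoint, use the Riemannian submersion structure to lift, and invoke hyperbolicity to keep metric control) is in the right spirit, but the specific mechanism you propose has a genuine gap at exactly the point you flag as the ``technical heart,'' and the gap is not merely technical.

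You define $g(x)$ as the endpoint of the horizontal lift of the entire radial path $f\circ\gamma_x$, starting from a single point $e_0\in h^{-1}(f(x_0))$. To verify slow oscillation you then need $d_E(g(x),g(x'))\to 0$ for $d(x,x')\le r$, which amounts to comparing horizontal lifts of two paths $f\circ\gamma_x$ and $f\circ\gamma_{x'}$ of length growing like $\|x\|$ (the hypothesis $\lim Lip(f|_{B_1(x)})=0$ does not make these lengths bounded). Your justification --- ``uniqueness and continuous dependence of horizontal lifts on the base geodesic'' and the claim that the lifts ``agree'' on the initial portion --- does not hold in this setting: continuous dependence of horizontal lifts is uniform only over paths of bounded length, and the two radial geodesics never actually coincide, they are merely exponentially close. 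Over paths of unbounded length the discrepancy between horizontal lifts of two $C^0$-close paths can accumulate (this is a holonomy effect; a Gronwall-type bound introduces a factor exponential in the path length), so without a quantitative re-anchoring argument the endpoints $g(x)$, $g(x')$ need not be close at all. The paper avoids this entirely: it never lifts along a whole radial geodesic. Instead it builds $g$ discretely, annulus by annulus of width $m$, setting $g_{k+1}(x)=pr_2\,p\bigl(f(x),g_k(\xi_{r+mk}(x))\bigr)$ where $p$ is the local fiberwise retraction onto the graph of $h$ from Proposition~\ref{retraction} (this is where horizontal lifting is used, and only over distances below the injectivity radius). The point of hyperbolicity is then quantitative: the radial projection between consecutive spheres is $2$-locally $C$-Lipschitz with $C<1$, and $m$ is chosen so that $C^m<1/(2K)$ beats the Lipschitz constant $K$ of the retraction, giving the recursion $L_{k+1}\le\max\{\gamma_k,\beta L_k\}$ with $\beta<1$ and hence $L_k\to 0$. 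That geometric decay, re-established at every step by the retraction, is what replaces the uncontrolled global holonomy in your version. (There is also a small orientation slip in your parametrization --- with $C(x,0)=x_0$ you want $g=\bar F(\cdot,1)$, not $\bar F(\cdot,0)$ --- but that is cosmetic compared to the accumulation issue.)
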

\begin{proof}
By Proposition~\ref{retraction} there is a neighborhood $W$  in $B\times E$
of the graph $\Gamma_h$ of $h$ that
admits a fiberwise smooth retraction $p:W_{\epsilon}\to \Gamma$. Thus the compositions with the projections
$\pi_B\circ p$ and $\pi_E\circ p$ are $K$-Lipschitz for some constant $K>0$.
We consider the box metric on $B\times E$, i.e., $$d((x,z),(x',z'))=\max\{d_B(x,x'), d_E(z,z')\}.$$
Then the map $p$ is $K$-Lipschitz. There is $\epsilon>0$  such that the $\epsilon$-neighborhood $N_{\epsilon}(\Gamma_h)$  lies in $W$.

Let $x_0$ be a fixed point in $\H^n$ and let $S(r)$ denote the sphere of
radius $r\in\N$ centered at $x_0$.  Let $\xi_{r}:\H^n\to B(r)$ be the
geodesic retraction onto the $r$-ball $B(r)$ centered at $x_0$.  The hyperbolicity of the metric on $\H^n$
implies that there is a constant $C<1$ such that
$$\xi_r\mid_{S(r+1)}:S(r+1)\to S(r)$$ is a 2-locally $C$-Lipschitz map for all $r$.
Let $m\in\N$ be such that $C^m<1/2K$.

We define a lift $g:\H^n \to E$ of $f$ with respect to $h$ as
follows: Choose a ball $B(r)=B_r(x_0)$ of radius $r$ centered at $x_0$ so that the restriction $f|_{\H^n\setminus B(r)}$ is $m$-locally $1/2K$-Lipschitz and
for every two points $x, y\in \H^n\setminus B(r)$ with
$\dist(x,y)\le m$ the fibers $h^{-1}(f(x))$ and $h^{-1}(f(y))$  are $\epsilon$-close in the Hausdorff metric.
This is possible since $f$ and $h^{-1}\circ f:\H^n\to 2^E$ are both slowly oscillating maps.

We define a lift $g_k:B(r+mk)\to E$ by induction  on $k$.  We begin with any
Lipschitz lift $g_0$ of $f$ over $B(r)$.
Assuming that $g_k$ is already defined on $B(r+mk)$, we define
$$g_{k+1}(x)=pr_2p(f(x),g_k(\xi_{r+mk}(x)))$$
for $x\in B(r+mk+m)\setminus int(B(r+mk))$ where $pr_2:B\times E\to E$ is the projection onto the second factor. Since $d(x,\xi_{r+mk}(x))\le m$, the point
$g_n(\xi_{r+mk}(x))\in h^{-1}f(\xi_{r+mk}(x))$ lies in the $\epsilon$-neighborhood of $h^{-1}(f(x))$. Hence,
the point $(f(x),g_k(\xi_{r+mk}(x)))\in B\times E$ lies in the $\epsilon$-neighborhood  of the fiber $\{f(x)\}\times h^{-1}(f(x))\subset\Gamma$
and, hence, $g_k(x)$ is well-defined. Since $h\circ pr_2\circ p(y,z)=y$ for all $y$ and $z$, we obtain that $g_{k+1}$ is a lift of $f$. Clearly, $g_{k+1}$ extends $g_k$.
The union of all $g_k$ defines a lift $g:\H^n\to E$ of $f$.

Let $\alpha_k$ be a $2$-local Lipschitz constant of $f$ restricted to the complement to to the ball $B(r+mk)$.
Denote the 2-local Lipschitz constant of $g$ restricted to $S(r+mt)$
by $L_t$. 
Since $g_k$ restricted to $S(r+mk)$ is 2-locally $L_k$-Lipschitz and $\xi_{r+mk}$ is 2-locally $C^m$-Lipschitz with $C^m<1$, it follows that the composition $$g_k\circ\xi_{r+mk}:S(r+mk+m)\to E$$ is 2-locally $L_kC^m$-Lipschitz. Therefore, the map $$(f, g_k\circ\xi_{r+mk}):S(r+mk+m)\to B\times E$$ is
2-locally $(\max\{\alpha_k,L_kC^m\})$-Lipschitz. Then the map $p\circ(f, g_k\circ\xi_{r+mk})$ restricted to $S(r+mk+m))$ is 2-locally $K(\max\{\alpha_n,L_nC^m\})$-Lipschitz. Since the projection $pr_2:B\times E\to E$ is 1-Lipschitz,  the map $g_{k+1}$ restricted to
the sphere $S(r+mk+m))$ is 2-locally $K(\max\{\alpha_k,L_kC^m\})$-Lipschitz. Denote by $\beta=KC^m$ and  by $\gamma_k=K\alpha_k$
and note that $\beta,\gamma_k<1$. Let $\gamma_{-1}=L_0$.

Thus,
$$
L_{k+1}\le \max\{\gamma_k, \beta L_k\}\ \eqno{(*)}.
$$
We can prove by induction on $k$ the inequality
$$
L_{k}\le\max\{\beta^i\gamma_{k-i-1}\}_{i=0}^k.
$$
It holds true for $k=0$: $L_0=\gamma_{-1}$. In view of the inequality $(*)$ we have
$$
L_{k+1}\le \max\{\gamma_k, \beta L_k\}\le\max\{\gamma_k,\beta^i\gamma_{k-i-1}\}_{i=0}^k=
\max\{\beta^i\gamma_{k-i}\}_{i=0}^{k+1}.
$$
We show that
for $k\in\N$, $\lim_{k\to\infty}L_k=0$. 
Indeed, if there is a bounded from 0
infinite subsequence $\beta^{i_s}\gamma_{k_s-i_s}>b>0$ with
$k_s\to\infty$, then $i_s$ cannot be bounded from above. Otherwise we would get a contradiction with $\gamma_k\to 0$. If $i_s$ is unbounded, we have
$\beta^{i_s}\to 0$ and hence $\beta^{i_s}\gamma_{k_s-i_s}\to 0$. It is still a contradiction.

Note that for $t\in[k,k+1]$,
$ L_t\le K(\max\{\alpha_k,L_k\})$. Hence $L_{t}\to 0$  as $t
\to\infty$. 

For every geodesic ray $z:\R_+\to\H^n$ issued from $x_0$,  for every $k\in\N$, and $s\in[0,m]$
we have $$\xi_{r+mk}(z(r+mk+s))=\xi_{r+mk}(z(r+mk))=z(r+mk)$$ and hence,
$$d_E(g(z(r+mk+s)),g(z(r+mk)))\le Kd_B(f(z(r+mk+s)),f(z(r+mk))).$$ 
This together with the assumption   $\lim_{x\to\infty}Lip(f|_{B_1(x)})=0$ imply that $Lip(g|_{B_m(x)\cap im(z)})\to 0$ uniformly on $z$.

Since both the spherical and the radial 2-local Lipschitz constants of $g$ tend to zero at infinity, we have
$Lip(g|_{B_1(x)})\to 0$ and Proposition~\ref{locally} implies that $g$ is slowly oscillating.
\end{proof}

\

Everywhere below $\bar H^*$ denotes the reduced Chech cohomology.
\begin{thm}[\cite{DFW}]\label{DFW}
For a uniformly contractible metric space $X$ with bounded geometry
and finite asymptotic dimension
$$
\bar H^*(\overline X;\Z_p)=0
$$
for all $p\in \N$ where $\overline X=X\cup\nu X$ is the standard Higson
compactification
\end{thm}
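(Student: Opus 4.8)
Write $n=\as X<\infty$. The plan is to convert the vanishing of $\bar H^*(\overline X;\Z_p)$ into a controlled homotopy problem on $X$. Since $\overline X$ is connected it suffices to treat $\check H^k(\overline X;\Z_p)=[\overline X,K(\Z_p,k)]$ for $k\ge1$, with $K(\Z_p,k)$ chosen to have finite skeleta; by compactness of $\overline X$, a map $\bar f\colon\overline X\to K(\Z_p,k)$ factors through a finite subcomplex $K$. Its restriction $f\colon X\to K$ is slowly oscillating, and $\bar f$ is null--homotopic precisely when $f$ admits a \emph{slowly oscillating} null--homotopy --- the bounded--coarse analogue of \corref{uniform}. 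So the goal becomes: every slowly oscillating map of $X$ to a finite complex $K$ agreeing with $K(\Z_p,k)$ through the range of dimensions in play is slowly--oscillating null--homotopic. Two consequences of the finiteness of $\Z_p$ are worth isolating: $K(\Z_p,k)$ has all integral homology groups finite, so the Serre mod--$\mathcal C$ Hurewicz theorem makes every homotopy group $\pi_j(K)$ that will occur as a coefficient group finite; and $X$, being uniformly contractible, is contractible, so $f$ is already topologically null--homotopic --- the whole content is to make the null--homotopy controlled.

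Next I would use $\as X=n$ to make the geometry finite--dimensional. From an anti--\v Cech sequence of uniformly bounded open covers of $X$ of multiplicity $\le n+1$, together with the canonical maps to their nerves (of dimension $\le n$) and partial ``coarse sections'' built from uniform contractibility and bounded geometry, one produces a finite--dimensional uniform simplicial complex $X'$ coarsely equivalent to $X$ by a coarse equivalence of bounded displacement. Transporting $f$ to $X'$, one constructs slowly oscillating homotopies $(X')^{(j)}\times I\to K$ from $f|_{X'}$ to a constant, by induction on $j$ and using the slowly--oscillating analogue of \theoref{obstruction}, just as in the proof of \theoref{main1} but for the bounded coarse structure; since $\dim X'<\infty$ the induction terminates. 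The one point to verify at stage $j$ is that a primary obstruction --- living in a controlled cohomology group of $X'$ with coefficients in $\pi_{j+1}(K)$, the bounded--coarse counterpart of the $\ell_\infty$--cohomology introduced above, and playing the role $H^{i}_{(\infty)}$ plays there --- vanishes. Granting that at every stage, the terminal homotopy $H\colon X'\times I\to K$ is slowly oscillating, its adjoint map into a suitable compact space of paths in $K$ extends over the Higson corona (as in the last step of \theoref{main1}), and hence $f$, and so $\bar f$, is null--homotopic.

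The crux, and the step I expect to be the genuine obstacle, is precisely this vanishing: the controlled cohomology of $X'$ with \emph{finite} coefficients is trivial in degrees $\ge2$. Here all three hypotheses conspire --- finite asymptotic dimension controls the coarse geometry of $X'$, uniform contractibility supplies the fillings for the inductive homotopies, and finiteness of $\pi_j(K)$ makes the relevant coarse cochains automatically bounded. This is the bounded--coarse shadow of the mechanism already in play above: just as boundedness of $S^1=\R/\Z$ forces the integral--to--real comparison of \propref{ger}, and thereby \theoref{hypgr}, a finite coefficient group here forces the controlled cohomology groups to collapse to ordinary cohomology of the contractible complex $X'$, which vanishes. (Equivalently, by the Bockstein of $0\to\Z\stackrel{p}{\to}\Z\to\Z_p\to0$, the assertion amounts to $\bar H^*(\overline X;\Z)$ being uniquely $p$--divisible --- a reformulation that is sometimes convenient but not needed for the above.)
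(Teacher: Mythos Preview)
The paper does not prove this theorem at all: it is stated as a citation from \cite{DFW} and used as a black box (via \corref{DFWfor H}) in the proof of \theoref{main2}. There is therefore no ``paper's own proof'' to compare your sketch against.

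As to the sketch itself: the overall shape --- reduce to a controlled null--homotopy problem, pass to a finite--dimensional nerve using $\as X<\infty$, and run an obstruction theory whose coefficients $\pi_j(K(\Z_p,k))$ are finite --- is indeed the spirit of \cite{DFW}. But be careful with two points. First, your analogy with \propref{ger} and \theoref{hypgr} is misleading: those concern $\ell_\infty$--cohomology, which is the cohomology theory adapted to the $C_0$ coarse structure, not the bounded one. For the bounded coarse structure the relevant controlled cohomology is different (essentially Roe's coarse cohomology / anti--\v Cech cohomology), and the mechanism by which finite coefficients force vanishing is not ``bounded cochains collapse to ordinary cochains'' but rather an honest argument using uniform contractibility and bounded geometry to build controlled fillings over the nerve system. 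Second, the step you flag as the ``crux'' is exactly where the work in \cite{DFW} lies, and your one--line justification (``finite coefficients make coarse cochains automatically bounded, hence controlled cohomology collapses to ordinary cohomology of a contractible complex'') is not an argument --- it elides the entire content of the theorem. If you intend this as more than a pointer to \cite{DFW}, that step needs to be filled in.
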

Since the asymptotic dimension of $\H^n$ is finite~\cite{G2}, ~\cite{Ro2},
we obtain the following
\begin{cor}\label{DFWfor H}
$$
\bar H^*(\overline{\H^n};\Z_p)=0
$$
for all $p$  where $\overline \H^n=\H^n\cup\nu \H^n$ is the standard Higson compactification.
\end{cor}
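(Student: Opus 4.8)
The plan is to deduce the corollary directly from Theorem~\ref{DFW}, so essentially all the work is in verifying that $X=\H^n$ meets its three hypotheses: uniform contractibility, bounded geometry, and finite asymptotic dimension. First I would observe that $\H^n$ is a complete, simply connected Riemannian manifold of constant negative curvature, hence a CAT(0) space and in particular contractible; because its isometry group acts transitively, the geodesic contraction of a metric ball toward its center has a contractibility function independent of the basepoint, which is exactly uniform contractibility. Transitivity of the isometry group also gives bounded geometry for free: any two balls of a fixed radius are isometric, so the number of $r$-balls needed to cover an $R$-ball is bounded uniformly in the center.

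The remaining hypothesis, finiteness of $\as\H^n$, is the one genuinely nontrivial input, and I would simply cite it: $\as\H^n=n<\infty$ by Gromov~\cite{G2}, with an alternative account in Roe~\cite{Ro2}. If one wanted to reprove it, the standard route is to exhibit, for each $r>0$, a cover of $\H^n$ by $n+1$ uniformly bounded and pairwise $r$-disjoint families, built from a horoball (shadow) decomposition of the space; but for our purposes the citation suffices. With all three conditions in hand, Theorem~\ref{DFW} applies to $X=\H^n$ and yields $\bar H^*(\overline{\H^n};\Z_p)=0$ for every $p\in\N$, which is precisely the claim.

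I do not expect any real obstacle here: the corollary is an immediate specialization of Theorem~\ref{DFW}, and the only point requiring care is recording the three standard properties of $\H^n$ together with correct references.
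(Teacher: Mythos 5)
Your proposal is correct and is essentially identical to the paper's argument: the corollary is deduced immediately from Theorem~\ref{DFW}, with finiteness of $\as\H^n$ cited from \cite{G2} and \cite{Ro2}. The paper merely states the asymptotic-dimension input and leaves uniform contractibility and bounded geometry implicit, whereas you spell them out via transitivity of the isometry group; this is a harmless elaboration, not a different route.
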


\begin{thm}\label{main2} 
For all $k$ and $n$
$$\bar H^{2k}(\overline{\H^n})=0.$$
\end{thm}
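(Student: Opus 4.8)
The plan is to reduce a class in $\bar H^{2k}(\overline{\H^n})$ to a map into a finite skeleton $K$ of $K(\Z,2k)$ and then kill it by the bundle trick made rational. Since $\overline{\H^n}$ is compact, any $\bar f:\overline{\H^n}\to K(\Z,2k)$ lands in a finite skeleton $K=K(\Z,2k)^{(i)}$; restricting to $\H^n$ gives a slowly oscillating map $f:\H^n\to K$, which by Proposition~\ref{locally}(2) may be assumed, up to a homotopy through slowly oscillating maps, to satisfy $\lim_{x\to\infty}Lip(f|_{B_1(x)})=0$. The idea is then to compose with a suitable map $B\to K$ from the base of a bundle $h:E\to B$ whose total space has vanishing rational homotopy through dimension $i$, lift to a slowly oscillating $g:\H^n\to E$ by Proposition~\ref{bundle}, extend over $\nu\H^n$, and conclude nullhomotopy on the corona — hence on $\overline{\H^n}$ after combining with the finite-coefficient vanishing from Corollary~\ref{DFWfor H}.

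Concretely, first I would build, for the fixed even degree $2k$ and fixed $i$, a locally trivial bundle $h:E\to B$ of closed smooth manifolds together with a map $\beta:B\to K(\Z,2k)$ such that (a) $\beta$ induces an isomorphism $H^{2k}(K(\Z,2k);\Q)\to H^{2k}(B;\Q)$ on the relevant generator, hence any rational class in degree $2k$ factors through $\beta$ up to the skeleton, and (b) $\pi_j(E)\otimes\Q=0$ for $j\le i$. The natural candidate is the spherical tangent bundle $S(TS^{2k})\to S^{2k}$ — the total space of the unit tangent bundle of an even sphere is rationally $(2k)$-connected, since in the Gysin/homotopy sequence the Euler class of $TS^{2k}$ is $\chi(S^{2k})=2\neq 0$ rationally — and then one must iterate or assemble such bundles to push rational connectivity of the total space past $i$, which is where the combinatorics of the construction live. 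Thus there is a rational equivalence (through dimension $i$) replacing the single bundle by one whose fiber inclusions carry the argument all the way, analogous to Proposition~\ref{odd} read in the opposite direction.

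With such $h:E\to B$ and $\beta$ in hand, the argument runs: given $[\bar f]\in\bar H^{2k}(\overline{\H^n})$ with $\bar f$ landing in $K$, write $f|_{\H^n}$ up to homotopy as $\beta\circ f'$ for some $f':\H^n\to B$ (possible because the obstructions to lifting through $\beta$ lie in rational cohomology groups that vanish by Theorem~\ref{DFW}/Corollary~\ref{DFWfor H} combined with rationality), arrange $\lim_{x\to\infty}Lip(f'|_{B_1(x)})=0$ via Proposition~\ref{locally}(2), apply Proposition~\ref{bundle} to get a slowly oscillating lift $g:\H^n\to E$, and extend $g$ to $\bar g:\overline{\H^n}\to E$. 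Since $E$ is rationally $i$-connected and $\dim\overline{\H^n}\le n\le i$ (using $\dim\overline{\H^n}=n$ from \cite{DKU}), the composite $\overline{\H^n}\xrightarrow{\bar g}E$ is rationally nullhomotopic, so $h\circ\bar g$ and therefore $\bar f$ represent a torsion class in $\check H^{2k}(\overline{\H^n})$. Finally Corollary~\ref{DFWfor H} gives $\bar H^{2k}(\overline{\H^n};\Z_p)=0$ for all primes $p$, which together with the universal coefficient / Bockstein sequences forces the torsion to vanish, so $\bar H^{2k}(\overline{\H^n})=0$.

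The main obstacle will be step two: producing a bundle $h:E\to B$ (or a finite tower of such) with closed smooth manifold total space and base, carrying a degree-$2k$ rational cohomology generator on the base, and with $\pi_j(E)\otimes\Q=0$ for all $j\le i$ where $i$ can be taken as large as we like. One unit tangent bundle only buys rational connectivity up to $2k$; climbing higher rationally while keeping everything a closed manifold bundle (so that Proposition~\ref{retraction} and Proposition~\ref{bundle} apply verbatim) is the delicate point, and Proposition~\ref{odd} shows it genuinely fails in odd degrees, which is exactly why the theorem is stated only for even $2k$. A secondary technical point is verifying that the lifting of $f$ through $\beta:B\to K$ can be done \emph{and kept compatible with the slow-oscillation hypothesis}, i.e. that one may first lift and then apply Proposition~\ref{locally}(2), rather than the reverse; this should be routine since slow oscillation is preserved under composition with the Lipschitz maps coming from the finite complexes involved, as in Corollary~\ref{uniform limit}.
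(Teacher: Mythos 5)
Your plan has the right overall shape (approximate, lift through a bundle with highly connected total space, extend to the corona, finish with Corollary~\ref{DFWfor H} and universal coefficients), but the step you yourself flag as ``the main obstacle'' is a genuine gap, not a technicality: you never produce the bundle $h:E\to B$ of closed manifolds with $B$ carrying the degree-$2k$ rational class and $\pi_j(E)\otimes\Q=0$ for all $j$ up to the skeleton dimension $i$. A single unit tangent bundle $S(TS^{2k})\to S^{2k}$ does not suffice (its total space is only a rational $(4k-1)$-sphere, and for $n>2k$ a degree-$2k$ class of the $n$-dimensional space $\overline{\H^n}$ need not even factor through $S^{2k}$), and ``iterating'' unit tangent bundles changes the base at each stage, so the degree-$2k$ class you are trying to kill no longer lives on the new base. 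The auxiliary claim that $f$ can be lifted through $\beta:B\to K$ because ``the obstructions lie in rational cohomology groups that vanish'' is also unjustified: $\beta$ is not a fibration with highly connected fiber, and the relevant obstruction groups involve the homotopy of its fiber with integral, not rational, coefficients.

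The paper closes exactly this gap by abandoning $K(\Z,2k)$ and working in complex $K$-theory: an element of $\bar K^0(\overline{\H^n})$ is a map to $BU$ which, by compactness, lands in a finite complex inside some $BU(m)$; the relevant bundle is then (the pullback of) the universal $U(m)$-bundle, i.e.\ a Stiefel-manifold bundle over a Grassmannian, whose total space is a closed manifold that is genuinely (not just rationally) as highly connected as one likes. Proposition~\ref{retraction} and Proposition~\ref{bundle} apply verbatim, the slowly oscillating lift extends over the corona into a space that is $i$-connected, so the map is nullhomotopic and $\bar K^0(\overline{\H^n})=0$. The Chern character then converts this single statement into $\bar H^{2k}(\overline{\H^n};\Q)=0$ for \emph{all} $k$ simultaneously, and Corollary~\ref{DFWfor H} plus the universal coefficient formula finishes as in your last step (which is correct). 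In short: the object you were trying to build by hand is the Stiefel bundle, and the reason the argument is confined to even degrees is precisely that the Chern character only sees even-dimensional cohomology --- consistent with Proposition~\ref{odd}, which rules out the odd-degree analogue of your construction.
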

\begin{proof}
We show that $\bar K^0(\overline{\H^n})=0$. Every element
$\alpha\in \bar K^0(\overline{\H^n})$
in the reduced complex K-theory can be represented by
a map $g:\overline{\H^n}\to BU$. In view of compactness of
$\overline{\H^n}$ there is $m$ such
$g$ lands in  a compact subset $K\subset BU(m)\subset BU$.
There is $\epsilon>0$ such that every two $\epsilon$-close maps to $BU(m)$ are homotopic.
We use Proposition~\ref{locally} to take an $\epsilon$-approximation $f:\overline{\H^n}\to BU(m)$ 
of $g$  with 
the property $\lim_{x\to\infty}Lip(f|_{B_1(x)})=0$. Then $f$ is representing $\alpha$. 
In view of compactness of $U(m)$,
Proposition~\ref{bundle}
applied to $f$ and the pull-back $E'\to K$ of the locally trivial
bundle $E(n)\to BU(n)$ implies
that $f$ factors through a contractible space
and hence is null-homotopic.

The Chern character isomorphism $$\bar
K^0(\overline{\H^n})\otimes\Q\cong \prod_{k}\bar
H^{2k}(\overline{\H^n};\Q)$$ implies that $\bar
H^{2k}(\overline{\H^n};\Q)=0$ for all $n$ and $k$. The Universal
Coefficient Formula  and Corollary~\ref{DFWfor H} imply the result.
\end{proof}

\begin{question}\label{Q-odd}
Is $\check H^i(\overline\H^n)=0$ for odd $i>1$?
\end{question}

\end{document}